\theoremstyle{definition}
\newtheorem{thm}{Theorem}[section]
\newtheorem{thm*}{Theorem}
\newtheorem{defi*}{Definition}
\newtheorem{lem}[thm]{Lemma}
\newtheorem{lem*}{Lemma}
\newtheorem{pro}[thm]{Proposition}
\newtheorem{pro*}{Proposition}
\newtheorem{question}[thm]{Question}
\newcommand{\MC}[1]{\mathcal{#1}}
\newcommand{\MB}[1]{\mathbb{#1}}
\newcommand{\BM}[1]{{\bm #1}}
\newcommand{\MM}[4]{\begin{bmatrix} #1 & #2 \\ #3 & #4 \end{bmatrix}}
\newcommand{\G}{\Gamma}
\newcommand{\D}{\Delta}
\newcommand{\sikaku}{\mathrel{\square}}
\DeclareMathOperator{\Spec}{Spec}
\DeclareMathOperator{\tr}{tr}
\DeclareMathOperator{\diag}{diag}
\DeclareMathOperator{\Ker}{Ker}
\title
{
Periodicity of Grover walks on bipartite regular graphs with at most five distinct eigenvalues
}
\author{Sho Kubota\thanks{
Department of Applied Mathematics, Faculty of Engineering, Yokohama National University,
Hodogaya, Yokohama 240-8501, Japan. \texttt{kubota-sho-bp@ynu.ac.jp}
}
}
\date{}
\begin{document}
\maketitle
\begin{abstract}
We determine connected bipartite regular graphs with four distinct adjacency eigenvalues
that induce periodic Grover walks, and show that it is only $C_6$.
We also show that there are only three kinds of the second largest eigenvalues
of bipartite regular periodic graphs with five distinct eigenvalues.
Using walk-regularity,
we enumerate feasible spectra for such graphs.
\vspace{8pt} \\
{\it Keywords:} Grover walk, quantum walk, periodicity, walk-regular \\
{\it MSC 2020 subject classifications:} 05C50; 81Q99
\end{abstract}

\section{Introduction}

Quantum walks are quantum analogues of classical random walks \cite{AAKV, ADZ, Gu}.
A great deal of research on quantum walks has been conducted in the last 20 years.
There is also a wide range of related fields.
In quantum information,
quantum walk models can be seen as generalizations of Grover's search algorithm \cite{Gr, P}.
More recently, quantum cryptography protocols based on quantum walks have been proposed \cite{PB, VRMPS}.

The subject of this paper is periodicity of quantum walks.
Periodicity has been studied as one of main problems of quantum walks,
and there are several previous studies.
Table~\ref{80} summarizes previous studies on periodicity of Grover walks on undirected graphs.
Other models have been studied in \cite{KSTY2019, Sa, SMA} for example.
Periodicity is a special case of state transfer problems.
The authors in \cite{KuSe} have applied periodicity to the study of perfect state transfer.
In context of quantum cryptography, 
periodicity of quantum walks can be a focus of attention \cite{PB}.

\begin{table}[ht]
  \centering
  \begin{tabular}{|c|c|}
\hline
Graphs & Ref. \\
\hline
\hline
Paths and Cycles & Trivial (or \cite{KSeYa}) \\ \hline
Complete graphs, complete bipartite graphs, SRGs & \cite{HKSS2017} \\ \hline
Generalized Bethe trees & \cite{KSTY2018} \\ \hline
Hamming graphs, Johnson graphs & \cite{Y2019} \\ \hline
Cycles (3-state) & \cite{KKKS} \\ \hline
Complete graphs with self loops & \cite{IMT} \\ \hline
\end{tabular}
 \caption{Previous works on periodicity of Grover walks on undirected graphs} \label{80}
\end{table}

In this paper,
we consider bipartite regular graphs with at most five distinct adjacency eigenvalues
that induce periodic Grover walks.
As we will see in the beginning of Section~\ref{1028-3},
the graphs with at most three distinct adjacency eigenvalues have been substantially studied before.
We will therefore study periodicity of the graphs with four or five distinct adjacency eigenvalues.
There are two main theorems.
See later sections for terminologies and definitions.
The first main result states that
if a bipartite regular graph with four or five distinct adjacency eigenvalues is periodic,
then the second largest eigenvalue can only take three different values.
The second main result is that
the only bipartite regular graph with four distinct adjacency eigenvalues
to induce periodic Grover walk is $C_6$:

\begin{thm}
{\it
Let $\G$ be a bipartite $k$-regular graph with the $A$-spectrum
$\{ [\pm k]^{1}, [\pm \theta]^{a}, [0]^{b} \}$,
where $a \geq 1$ and $b \geq 0$.
Then $\G$ is periodic if and only if
$k$ is even and $\theta \in \{ \frac{k}{2}, \frac{\sqrt{2}}{2}k, \frac{\sqrt{3}}{2}k \}$.
}
\end{thm}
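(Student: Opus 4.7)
The plan is to rewrite periodicity of the Grover walk as the statement that every eigenvalue of the time evolution operator $U$ is a root of unity, translate this via the spectral mapping theorem into a condition on the adjacency spectrum, and then apply Niven's theorem to isolate the possible values of $\theta/k$.

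First I recall that for a $k$-regular graph $\G$, the spectral mapping theorem gives $\Spec(U) = \{ e^{\pm i \arccos(\mu)} : \mu \in \Spec(A/k) \}$ together with extra eigenvalues $\pm 1$ whose multiplicities depend on $|E|-|V|$ and on bipartiteness. Since $\pm 1$ are automatically roots of unity, periodicity of $\G$ is equivalent to $e^{\pm i\arccos(\mu)}$ being a root of unity for every $\mu \in \Spec(A/k)$. In our case $\mu \in \{\pm 1, \pm\theta/k, 0\}$; the contributions from $\pm 1$ and $0$ give $\pm 1$ and $\pm i$, which are always roots of unity, so the only nontrivial condition is that $e^{\pm i \arccos(\theta/k)}$ be a root of unity, i.e.\ $\arccos(\theta/k) \in \pi \MB{Q}$.

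For the forward implication, I set $\alpha = \theta/k = \cos(\pi r)$ for some $r \in \MB{Q}$. A minimal polynomial argument based on the symmetric spectrum $\{\pm k, \pm\theta, 0\}$ shows $\theta^{2} \in \MB{Z}$: indeed, $\theta$ is an algebraic integer, and if it were irrational then its Galois conjugates would have to lie in $\{\pm k, \pm\theta, 0\}$ and be irrational themselves, forcing the minimal polynomial to be $x^{2} - \theta^{2}$. Consequently $\alpha^{2} \in \MB{Q}$, and the double-angle identity $\cos(2\pi r) = 2\alpha^{2} - 1$ shows $\cos(2\pi r) \in \MB{Q}$. Niven's theorem then yields $\cos(2\pi r) \in \{0, \pm 1/2, \pm 1\}$, whence $\alpha^{2} \in \{0, 1/4, 1/2, 3/4, 1\}$. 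The distinctness of the eigenvalues $\pm k, \pm\theta, 0$ rules out $\alpha \in \{0, \pm 1\}$, leaving $\theta \in \{\tfrac{k}{2}, \tfrac{\sqrt 2}{2}k, \tfrac{\sqrt 3}{2}k\}$. Imposing $\theta^{2} \in \MB{Z}$ in each case then forces $k$ to be even.

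The reverse implication is immediate: when $k$ is even and $\theta$ is one of the three listed values, the quantities $\arccos(\pm \theta/k)$ lie in $\{\pi/3, \pi/4, \pi/6, 2\pi/3, 3\pi/4, 5\pi/6\}$, so every eigenvalue of $U$ is a root of unity of order dividing $24$, and $\G$ is periodic. The main obstacle I anticipate is the careful multiplicity bookkeeping in the spectral mapping theorem, together with the algebraic-integer argument ensuring $\theta^{2} \in \MB{Z}$; once those are settled, the core of the proof reduces to the elementary application of Niven's theorem to $\cos(2\pi r)$ rather than $\cos(\pi r)$ itself.
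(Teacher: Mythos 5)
Your proof is correct, but it takes a genuinely different route from the paper's. The paper's necessity argument runs through Lemma~\ref{0716-1}: periodicity forces $2\theta/k = e^{i\arccos(\theta/k)} + e^{-i\arccos(\theta/k)}$ to be an algebraic integer, and then the explicit integral bases of $\MB{Q}(\sqrt{m})$ (Proposition~\ref{0930-1}, with the $m \bmod 4$ case split) are combined with the bound $0 < 2\theta/k < 2$ to pin down $\theta$ and the parity of $k$. You instead translate periodicity into $\arccos(\theta/k) \in \pi\MB{Q}$, establish $\theta^2 \in \MB{Z}$ purely from the graph spectrum (the Galois conjugates of an irrational eigenvalue must again be irrational eigenvalues, forcing minimal polynomial $x^2 - \theta^2$ --- a fact the paper never needs, since it gets $\theta^2 \in \MB{Q}$ from the trace identity~(\ref{0716-2})), and then invoke Niven's theorem on $\cos(2\arccos(\theta/k)) = 2\theta^2/k^2 - 1 \in \MB{Q}$. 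The two approaches are cousins --- Niven's theorem is itself proved by the same ``$2\cos$ of a rational angle is an algebraic integer'' principle that underlies Lemma~\ref{0716-1} --- but yours outsources the number theory to a named black box and avoids the quadratic-field bookkeeping entirely, at the cost of needing the conjugate argument for $\theta^2 \in \MB{Z}$; the paper's version is self-contained modulo the standard description of $\Omega \cap \MB{Q}(\sqrt{m})$ and extracts the parity of $k$ from that same description rather than from $\theta^2 \in \MB{Z}$. Your handling of the extra $\pm 1$ eigenvalues and of the sufficiency direction matches the paper's. One small point worth writing out if you formalize this: the equivalence between ``$U^\tau = I$ for some $\tau$'' and ``every eigenvalue of $U$ is individually a root of unity'' uses both the diagonalizability of $U$ (it is unitary) and the finiteness of the spectrum to take a common $\tau$; the paper cites Lemma~\ref{1110-1} for exactly this.
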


\begin{thm}
{\it
Let $\G$ be a bipartite $k$-regular graph with the $A$-spectrum
$\{ [\pm k]^{1}, [\pm \theta]^{\frac{n}{2}-1}\}$,
where $n$ is the number of vertices of $\G$.
Then $\G$ is periodic if and only if $\G$ is isomorphic to the cycle graph $C_6$.
}
\end{thm}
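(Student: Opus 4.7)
The plan is to combine the previously stated first main theorem (applied with $b=0$) with the structure theory of bipartite regular graphs having exactly four distinct adjacency eigenvalues. By that result, periodicity forces $k$ to be even and $\theta\in\{k/2,\sqrt{2}k/2,\sqrt{3}k/2\}$, so it remains to show that, among connected bipartite $k$-regular graphs with spectrum $\{[\pm k]^{1},[\pm\theta]^{n/2-1}\}$ for these three admissible values of $\theta$, the only example is $C_{6}$. The converse, that $C_{6}$ itself is periodic, follows from the spectral mapping for Grover walks since the eigenvalues $\pm 2,\pm 1$ of $C_{6}$ yield only sixth roots of unity on the unit circle.

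First I would exploit the bipartite block structure. Writing $V(\Gamma)=X\cup Y$ with $|X|=|Y|=v=n/2$ and
\[
A=\begin{pmatrix} O & B \\ B^{T} & O \end{pmatrix},
\]
the $v\times v$ matrix $BB^{T}$ is symmetric, has constant diagonal $k$, and has exactly two distinct eigenvalues $k^{2}$ and $\theta^{2}$ of multiplicities $1$ and $v-1$. Hence
\[
BB^{T}=\theta^{2}I+\frac{k^{2}-\theta^{2}}{v}J,
\]
and comparing diagonals gives the key identity
\[
\theta^{2}=\frac{k(v-k)}{v-1}.
\]
Equivalently, $\Gamma$ is the incidence graph of a symmetric $2$-$(v,k,\lambda)$ design with $\lambda=(k^{2}-\theta^{2})/v$.

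Substituting the three admissible values $\theta^{2}\in\{k^{2}/4,\,k^{2}/2,\,3k^{2}/4\}$ into the identity yields, respectively, $k=4v/(v+3)$, $k=2v/(v+1)$, and $k=4v/(3v+1)$. The last two expressions are strictly less than $2$ for every $v\geq 2$, so they produce no admissible graphs. The first rearranges as $k=4-12/(v+3)$, which is an integer precisely when $v+3$ divides $12$; the only candidates with $k\geq 2$ are $(v,k)=(3,2)$ and $(v,k)=(9,3)$, and the latter is excluded by the parity requirement $2\mid k$. Since a connected bipartite $2$-regular graph on $6$ vertices must be $C_{6}$, and the spectrum of $C_{6}$ is indeed $\{[\pm 2]^{1},[\pm 1]^{2}\}$, the argument concludes.

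The main obstacle I anticipate is not the Diophantine step but the clean derivation of the identity $\theta^{2}=k(v-k)/(v-1)$ from the two-eigenvalue property of $BB^{T}$; once that is in place, the problem reduces to a finite case analysis governed entirely by integrality and the parity constraint from Theorem~1, with no need to invoke deep existence or non-existence results for symmetric designs.
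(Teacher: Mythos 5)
Your proof is correct and follows essentially the same route as the paper: both reduce to the block identity $NN^{\top}=\theta^{2}I+\tfrac{2(k^{2}-\theta^{2})}{n}J_{n/2}$ for the bipartite adjacency block, compare diagonal entries, and combine the result with the three admissible values of $\theta$ and the parity of $k$ from the first main theorem. The only cosmetic difference is the endgame—the paper bounds $k$ via the inequality $k>\theta^{2}$ and then recovers $n=6$ from the trace identity, whereas you solve the diagonal identity exactly for $k$ in terms of $v=n/2$ and finish by a divisibility argument; these are equivalent elementary computations.
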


This paper is organized as follows.
Section~\ref{1028-2} is mainly preparation.
Terms and facts related to spectral graph theory and Grover walks are introduced.
In Section~\ref{1028-3}, we derive the first main theorem.
Using rings of integers of quadratic fields,
we derive conditions on eigenvalues that periodic graphs have.
In Section~\ref{1028-4}, we derive the second main theorem.
In Section~\ref{1028-5}, we study bipartite regular graphs with five distinct adjacency eigenvalues.
Focusing on walk-regularity, we enumerate feasible spectra that periodic graphs have.
In Section~\ref{1028-6}, we summarize the results and discuss open problems.

\section{Preliminaries} \label{1028-2}

See \cite{GR} for basic terminologies related to graphs.
Throughout this paper,
we assume that all graphs are finite, simple, and connected whether we mention or not.
Let $M$ be a square matrix.
We call the multiset of eigenvalues of $M$ the {\it spectrum} or the {\it $M$-spectrum},
and denote it by $\Spec(M)$.
For example, when the $M$-spectrum is $\{ 4,0,0,0,-2,-2\}$,
we describe the multiplicities by superscript, as in $\{ [4]^1, [0]^3, [-2]^2 \}$.
The matrices $I_n$ and $J_n$ denote the identity matrix and the all-ones matrix of size $n$, respectively.
The subscripts can be omitted if the sizes of these matrices are clear in context.

\subsection{Graphs and their spectra}

The {\it adjacency matrix} $A=A(\G) \in \MB{C}^{V \times V}$ of a graph $\G = (V, E)$ is defined by
\[ A_{x,y} = \begin{cases}
1 \qquad &\text{if $\{ x,y \} \in E$,} \\
0 \qquad &\text{otherwise.}
\end{cases} \]
We call eigenvalues of $A(\G)$ {\it adjacency eigenvalues}.

\begin{pro}[Proposition~3.3.1 in \cite{BH}] \label{0925-1}
{\it
Let $\G$ be a $k$-regular graph with the adjacency eigenvalues
$k=\lambda_1 \geq \lambda_2 \geq \dots \geq \lambda_n$,
where $n$ is the number of vertices.
Then we have $\sum_{i=1}^n \lambda_i^2 = nk$.
}
\end{pro}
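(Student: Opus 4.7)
The plan is to express $\sum_i \lambda_i^2$ as $\tr(A^2)$ and then evaluate that trace combinatorially using $k$-regularity.

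First I would observe that, since $A = A(\G)$ is a real symmetric matrix, it is orthogonally diagonalizable with real spectrum $\lambda_1, \dots, \lambda_n$, so the eigenvalues of $A^2$ (counted with multiplicity) are exactly $\lambda_1^2, \dots, \lambda_n^2$. Applying the standard identity $\tr(M) = \sum(\text{eigenvalues of }M)$ to $M = A^2$ yields $\tr(A^2) = \sum_{i=1}^n \lambda_i^2$. This is the eigenvalue-side computation.

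Next I would compute the same trace directly from matrix entries. Expanding $(A^2)_{ii} = \sum_{j=1}^n A_{ij} A_{ji}$ and using that $A$ is symmetric with $0/1$ entries, each term equals $A_{ij}^2 = A_{ij}$, so $(A^2)_{ii} = \sum_j A_{ij} = \deg(i)$. By $k$-regularity this is $k$ for every $i$, and summing over $i$ gives $\tr(A^2) = nk$. Combinatorially, this is just the fact that $(A^2)_{ii}$ counts closed walks of length two at vertex $i$, which coincides with the number of neighbors of $i$.

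Equating the two evaluations of $\tr(A^2)$ produces the desired identity $\sum_{i=1}^n \lambda_i^2 = nk$. There is no real obstacle: the content of the proposition reduces to the observation that the number of closed walks of length two through any vertex equals its degree, packaged via the trace-eigenvalue identity.
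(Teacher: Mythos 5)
Your argument is correct and complete: computing $\tr(A^2)$ once via the spectrum and once entrywise via $k$-regularity is exactly the standard proof of this identity. The paper itself offers no proof but simply cites Proposition~3.3.1 of Brouwer--Haemers, whose argument is the same trace computation you give.
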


A graph $\G = (V, E)$ is said to be {\it bipartite}
if its vertex set can be partitioned into two parts $V_1$ and $V_2$ such that
for any edge one end is in $V_1$ and the other end is in $V_2$.
The two parts $V_1, V_2$ are called {\it partite sets} of $V$.
It is well-known that the $A$-spectrum of a bipartite graph is symmetric about the origin and vice versa.
See Theorem~8.8.2 in \cite{GR} for details.
In this paper,
we are mainly concerned with (connected) bipartite regular graphs
with four or five distinct adjacency eigenvalues.
The $A$-spectrum of such a graph is usually denoted by $\{ [k]^{1}, [\theta]^{a}, [0]^b, [-\theta]^{a}, [-k]^1 \}$,
but we will simply write it as $\{ [\pm k]^{1}, [\pm \theta]^{a}, [0]^b \}$.
The following is a theorem due to Hoffman,
often used in studies of graphs with few distinct adjacency eigenvalues.

\begin{pro}[Theorem~1 and its proof in \cite{H}] \label{0930-2}
{\it
Let $\G$ be a connected $k$-regular graph with $n$ vertices,
and let the distinct adjacency eigenvalues be $k > \lambda_2 > \dots > \lambda_s$.
Then we have
\[ q(A(\G)) = \frac{q(k)}{n}J_n, \]
where $q(x) = \prod_{i=2}^s (x-\lambda_i)$.
}
\end{pro}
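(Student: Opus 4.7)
The plan is to use the spectral decomposition of $A = A(\G)$. Since $\G$ is connected and $k$-regular, the Perron--Frobenius theorem guarantees that $k$ is a simple eigenvalue with eigenvector $\BM{1}$, the all-ones vector; moreover $A$ is real symmetric, so it admits an orthonormal eigenbasis in which $\BM{1}/\sqrt{n}$ is the unit eigenvector for $k$ and every other eigenvector lies in $\BM{1}^{\perp}$.

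First I would observe that for any eigenvector $v$ of $A$ corresponding to $\lam_i$ with $i \geq 2$, the factor $(x - \lam_i)$ of $q(x)$ forces $q(A)v = q(\lam_i) v = 0$. Hence $q(A)$ annihilates the orthogonal complement $\BM{1}^{\perp}$. On the other hand, $q(A)\BM{1} = q(k)\BM{1}$, so $q(A)$ acts as multiplication by $q(k)$ on the line $\Span{\BM{1}}$ and as zero on its orthogonal complement.

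This determines $q(A)$ uniquely. The orthogonal projection onto $\Span{\BM{1}}$ is $\frac{1}{n}\BM{1}\BM{1}^{\top} = \frac{1}{n}J_n$, so the operator with eigenvalue $q(k)$ on $\Span{\BM{1}}$ and eigenvalue $0$ on $\BM{1}^{\perp}$ is $\frac{q(k)}{n}J_n$. Therefore $q(A) = \frac{q(k)}{n} J_n$, as claimed.

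The main step --- such as it is --- is the appeal to connectedness, which guarantees that $k$ is a simple eigenvalue so that the eigenspace decomposition cleanly separates $\BM{1}$ from the remaining eigenvectors. Without that hypothesis, $k$ could occur with multiplicity greater than one, and $q(A)$ would be a rank-larger-than-one projector rather than a scalar multiple of $J_n$.
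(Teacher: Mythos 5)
Your proof is correct. The paper does not reproduce a proof of this proposition (it simply cites Theorem~1 of Hoffman's paper), and your spectral-decomposition argument --- $k$ simple by Perron--Frobenius with eigenvector $\BM{1}$, $q(A)$ annihilating $\BM{1}^{\perp}$ and acting as $q(k)$ on $\Span{\BM{1}}$, hence equal to $q(k)$ times the projector $\frac{1}{n}J_n$ --- is exactly the standard argument behind the cited result, so nothing further is needed.
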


\subsection{Grover walks} \label{1001-1}

Let $\G$ be a  graph. 
Define $\MC{A} = \MC{A}(\G)=\{ (x, y), (y, x) \mid \{x, y\} \in E(\G) \}$.
The origin $x$ and terminus $y$ of $a=(x, y) \in \MC{A}$ are denoted by $o(a), t(a)$, respectively.
We write the inverse arc of $a$ as $a^{-1}$.

We introduce several matrices on Grover walks for a graph $\G$.
The {\it boundary matrix} $d = d(\G) \in \MB{C}^{V \times \MC{A}}$ is defined by
$d_{x,a} = \frac{1}{\sqrt{\deg x}} \delta_{x, t(a)}$,
where $\delta_{a,b}$ is the Kronecker delta.
The {\it shift matrix} $S = S(\G) \in \MB{C}^{\MC{A} \times \MC{A}}$
is defined by $S_{a, b} = \delta_{a,b^{-1}}$.
Define the {\it time evolution matrix} $U = U(\G) \in \MB{C}^{\MC{A} \times \MC{A}}$
by $U = S(2d^*d-I)$.
Quantum walks defined by $U$ is called {\it Grover walks}.
The {\it discriminant} $T=T(\G) \in \MB{C}^{V \times V}$ is defined by $T = dSd^*$.
Henceforth we will consider not only the spectrum of the adjacency matrix,
but also the ones of the discriminant and the time evolution matrix.
Define $\Spec_{A}(\G) = \Spec(A(\G))$ for a graph $\G$.
$\Spec_{T}(\G)$ and $\Spec_{U}(\G)$ are defined in the same way.

\begin{lem} \label{0925-2}
{\it
Let $\G$ be a $k$-regular graph,
and let $A$ and $T$ be the adjacency matrix and the discriminant of $\G$, respectively.
Then we have $T = \frac{1}{k}A$.
Therefore, the absolute values of eigenvalues of $T$ does not exceed $1$.
}
\end{lem}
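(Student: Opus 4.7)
The plan is a short direct calculation from the definitions of $d$ and $S$ given in Section~\ref{1001-1}. I would first expand the $(x,y)$-entry $T_{x,y} = (dSd^*)_{x,y}$ as a double sum over arcs $a, b \in \MC{A}$. Since every entry of $d$ is real, the conjugation implicit in $d^*$ is harmless, and the factor $S_{a,b} = \delta_{a, b^{-1}}$ forces $b = a^{-1}$, collapsing the double sum to
\[
T_{x,y} = \sum_{a \in \MC{A}} d_{x,a}\, d_{y, a^{-1}}.
\]

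Next I would invoke $k$-regularity: every nonzero entry of $d$ is $1/\sqrt{k}$, the condition $d_{x,a} \neq 0$ is equivalent to $t(a) = x$, and $d_{y, a^{-1}} \neq 0$ is equivalent to $t(a^{-1}) = o(a) = y$. Hence the surviving sum counts arcs going from $y$ to $x$, and this count is $A_{x,y}$ because $\G$ is simple. Thus $T_{x,y} = A_{x,y}/k$ for every pair $(x,y)$, which gives $T = \frac{1}{k}A$.

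The second assertion is then an immediate corollary. For a $k$-regular graph, the all-ones vector is an eigenvector of $A$ with eigenvalue $k$, and by Perron--Frobenius (or by the standard spectral-radius bound $\|A\|_2 \leq \max_x \deg x$) every adjacency eigenvalue lies in $[-k, k]$. Dividing by $k$ gives that every eigenvalue of $T$ lies in $[-1, 1]$, which is the stated bound.

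I do not anticipate any real obstacle: this is a bookkeeping lemma whose only mild subtlety is tracking the arc-reversal $a \mapsto a^{-1}$ introduced by $S$ and remembering the identity $t(a^{-1}) = o(a)$. Beyond that, the whole argument is just unpacking the definitions, and I would present it in two short displayed equations followed by the one-line spectral-radius remark.
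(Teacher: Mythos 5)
Your computation is correct: collapsing the double sum via $S_{a,b}=\delta_{a,b^{-1}}$, using $t(a^{-1})=o(a)$, and noting that each nonzero entry of $d$ equals $1/\sqrt{k}$ gives exactly $T_{x,y}=A_{x,y}/k$, and the spectral bound follows from $\Spec(A)\subset[-k,k]$ for a $k$-regular graph. The paper itself omits the proof and defers to \cite{KST}, but your argument is precisely the standard unpacking of the definitions that that reference carries out in greater generality, so there is nothing to add.
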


We omit a proof.
See Section~3 in \cite{KST} for more general claim and its proof.
Relationship between $U$-spectra and $T$-spectra has been studied
not only in the Grover walks but also in more general models \cite{HKSS2014, KSSS, KSY}.
We cite the result in \cite{HKSS2014},
but the statement is slightly modified to fit our setting.

\begin{thm}[\cite{HKSS2014}] \label{0925-3}
{\it
Let $\G = (V, E)$ be a connected graph.
Then we have
\[ \Spec_{U}(\G) = \{ e^{\pm i \arccos \lambda} \mid \lambda \in \Spec_{T}(\G) \} \cup \{ 1 \}^{M_1} \cup \{-1\}^{M_{-1}}, \]
where $M_{1} = |E| - |V| + 1$ and $M_{-1} = |E| - |V| + \dim \Ker (T + I)$.
}
\end{thm}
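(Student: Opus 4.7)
The plan is to construct an orthogonal decomposition $\MB{C}^{\MC{A}}=\MC{L}\oplus\MC{L}^{\perp}$, where $\MC{L}=d^{*}(\MB{C}^{V})+Sd^{*}(\MB{C}^{V})$, diagonalize $U$ on $\MC{L}$ via the eigenvectors of $T$, and identify the residual $\pm 1$ eigenspaces living on $\MC{L}^{\perp}$ with combinatorial cycle spaces of $\G$.

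First I would record the identities $dd^{*}=I_{V}$ (immediate from the definition of $d$) and $dSd^{*}=T$, so that $P:=d^{*}d$ is the orthogonal projection onto $\mathrm{Im}(d^{*})$. A direct calculation yields $Ud^{*}f=Sd^{*}f$ and $USd^{*}f=2Sd^{*}(Tf)-d^{*}f$, so $\MC{L}$ is $U$-invariant. For any eigenvector $f$ of $T$ with $Tf=\lambda f$, the subspace $W_{f}=\mathrm{span}\{d^{*}f,Sd^{*}f\}$ is $U$-invariant with matrix $\begin{pmatrix}0 & -1 \\ 1 & 2\lambda\end{pmatrix}$ in the basis $(d^{*}f,Sd^{*}f)$; its characteristic polynomial $t^{2}-2\lambda t+1$ has roots $e^{\pm i\arccos\lambda}$. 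Since $\langle d^{*}f,Sd^{*}g\rangle=\langle Tg,f\rangle$, choosing an orthonormal eigenbasis of $T$ makes the $W_{f}$'s mutually orthogonal, and their direct sum exhausts $\MC{L}$.

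The degenerate values $\lambda=\pm 1$ need separate treatment: the identity $\|Sd^{*}f\mp d^{*}f\|^{2}=2(1\mp\lambda)\|f\|^{2}$ forces $Sd^{*}f=\lambda d^{*}f$ when $\lambda=\pm 1$, so $W_{f}$ collapses to a one-dimensional $U$-eigenspace with eigenvalue $\lambda$. This gives the dimension drop $\dim\MC{L}=2|V|-\dim\Ker(T-I)-\dim\Ker(T+I)$ and fits the convention in the stated theorem that each $\lambda=\pm 1$ eigenvector of $T$ contributes a single eigenvalue to the spectral mapping.

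On $\MC{L}^{\perp}$, orthogonality to $d^{*}(\MB{C}^{V})$ forces $d\phi=0$, hence $P\phi=0$ and $U\phi=-S\phi$. Since $S$ is an involution that preserves $\MC{L}^{\perp}$, the $(-1)$-eigenspace of $S|_{\MC{L}^{\perp}}$ produces the extra $+1$ eigenvalues of $U$ and the $(+1)$-eigenspace produces the extra $-1$ eigenvalues. Identifying $\Ker(S-I)$ with $\MB{C}^{E}$ (symmetric functions on arcs) and $\Ker(S+I)$ with functions on oriented edges, the restrictions of $d$ become, up to the degree normalization, the unsigned and signed vertex-edge incidence maps respectively. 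Standard rank formulas then give $\dim(\Ker(S+I)\cap\Ker d)=|E|-|V|+1$ from connectedness of $\G$ and $\dim(\Ker(S-I)\cap\Ker d)=|E|-|V|+\dim\Ker(T+I)$, using that the unsigned incidence matrix has corank $\dim\Ker(T+I)$ (one when $\G$ is bipartite, zero otherwise). The main obstacle I expect is precisely this last identification: matching the combinatorial kernels on $\MC{L}^{\perp}$ with $\Ker(T+I)$ through the bipartiteness criterion, while simultaneously keeping the degenerate $W_{f}$ contributions consistent with the bookkeeping conventions of the stated identity.
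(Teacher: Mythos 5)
The paper does not prove this statement; it is imported verbatim (up to notation) from \cite{HKSS2014}, so there is no internal proof to compare against. Your outline is essentially the standard spectral mapping argument used in that reference and its successors: the $U$-invariant decomposition $\MB{C}^{\MC{A}}=\MC{L}\oplus\MC{L}^{\perp}$ with $\MC{L}=d^{*}(\MB{C}^{V})+Sd^{*}(\MB{C}^{V})$, the $2\times 2$ blocks with characteristic polynomial $t^{2}-2\lambda t+1$, the collapse at $\lambda=\pm1$, and the identification of $\MC{L}^{\perp}\cap\Ker(S\pm I)$ with the kernels of the signed and unsigned incidence maps, whose coranks are $|E|-|V|+1$ and $|E|-|V|+\dim\Ker(T+I)$ for a connected graph. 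All the individual steps check out ($dd^{*}=I$, the commutation identities, the Gram-matrix orthogonality, and the bipartiteness criterion for $-1\in\Spec(T)$), so the proposal is correct and follows the same route as the cited source.
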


By Lemma~\ref{0925-2} and Theorem~\ref{0925-3},
the $U$-spectrum is obtained from the $A$-spectrum via the discriminant when a graph is regular.

Let $U$ be the time evolution matrix of a graph $\G$.
We say that $\G$ is {\it periodic} if there exists a positive integer $\tau$ such that $U^{\tau} = I$.
When a graph $\G$ is periodic,
the positive integer $\min \{ \tau \in \MB{N} \mid U^{\tau} = I \}$ is called the {\it period}.
As we see immediately, periodicity is determined by $U$-spectrum.

\begin{lem}[Lemma~5.3 in \cite{KSeYa}] \label{1110-1}
{\it
Let $U$ be the time evolution matrix of a graph $\G$.
Then, we have
\[
\{ \tau \in \MB{N} \mid U^{\tau} = I \} =
\{ \tau \in \MB{N} \mid \lambda^{\tau} = 1 \text{ \emph{for any} $\lambda \in \Spec_U(\G)$} \}.
\]
In particular, $\G$ is periodic if and only if
there exists a positive integer $\tau$ such that $\lambda^{\tau} = 1$
for any eigenvalue $\lambda$ of $U$.
}
\end{lem}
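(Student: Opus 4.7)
The plan is to reduce the equality of the two sets to the fact that $U$ is a unitary, and hence diagonalizable, matrix, because diagonalizability is exactly what separates ``$\lambda^{\tau}=1$ for all eigenvalues'' from ``$U^{\tau}=I$''.

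First I would verify that $U=S(2d^{*}d-I)$ is unitary. The shift matrix $S$ is a permutation matrix (it sends $a$ to $a^{-1}$ involutively), so $S$ is unitary. For the other factor, a direct computation from the definition $d_{x,a}=\frac{1}{\sqrt{\deg x}}\delta_{x,t(a)}$ gives
\[
(dd^{*})_{x,y}=\sum_{a\in\MC{A}}\frac{\delta_{x,t(a)}\delta_{y,t(a)}}{\sqrt{\deg x\,\deg y}}=\delta_{x,y},
\]
so $dd^{*}=I_{V}$. Hence $d^{*}d$ is an orthogonal projection, and $2d^{*}d-I$ is a self-adjoint involution; in particular it is unitary. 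Thus $U$ is a product of two unitaries and is unitary.

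Next, because $U$ is unitary it is diagonalizable: there exist a unitary $P$ and a diagonal matrix $D=\diag(\lambda_{1},\dots,\lambda_{N})$, whose diagonal entries are exactly the elements of $\Spec_{U}(\G)$ counted with multiplicity, such that $U=PDP^{*}$. Therefore $U^{\tau}=PD^{\tau}P^{*}$, and $U^{\tau}=I$ holds if and only if $D^{\tau}=I$, that is, $\lambda_{i}^{\tau}=1$ for every $i$. This already proves the ``$\supseteq$'' inclusion of the displayed set equality. For the reverse inclusion, if $U^{\tau}=I$ and $v\neq0$ is any eigenvector of $U$ with eigenvalue $\lambda$, then $v=U^{\tau}v=\lambda^{\tau}v$, forcing $\lambda^{\tau}=1$; this works with no use of diagonalizability.

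Combining the two inclusions gives the set equality, and the ``in particular'' statement is the special case obtained by declaring $\G$ periodic exactly when the left-hand set is nonempty. The only nontrivial step is establishing diagonalizability of $U$, which is where I expect the main (but mild) obstacle to lie; it is handled cleanly by checking $dd^{*}=I$ so that $2d^{*}d-I$ is a unitary reflection, after which the argument is purely linear-algebraic.
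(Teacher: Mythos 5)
Your proof is correct: the verification that $dd^{*}=I_{V}$ (hence that $2d^{*}d-I$ is a self-adjoint involution and $U$ is unitary, therefore diagonalizable) is exactly the point that upgrades ``every eigenvalue is a $\tau$-th root of unity'' to ``$U^{\tau}=I$''. The paper itself states this lemma as a citation of Lemma~5.3 in \cite{KSeYa} without reproducing a proof, and your argument is the standard one that the cited source relies on, so there is nothing to flag.
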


\section{Bipartite regular graphs with at most five distinct adjacency eigenvalues} \label{1028-3}

In this section,
we derive a general fact on bipartite regular periodic graphs with four or five distinct adjacency eigenvalues.
Note that the bipartite regular graph with two distinct adjacency eigenvalues
is the complete graph $K_2$, which is known to be periodic \cite{HKSS2017}.
Bipartite regular graphs with three distinct adjacency eigenvalues
have the $A$-spectra of the form $\{[k]^1, [0]^{n-2}, [-k]^{1} \}$.
It is well-known that connected regular graphs with three distinct adjacency eigenvalues are strongly regular.
Recovering the parameters from the eigenvalues,
we see that their complements are disconnected.
Disconnected strongly regular graphs are disjoint unions of the complete graphs with the same size
(See Lemma~10.1.1 in \cite{GR}).
This implies that the graphs are the complete bipartite graphs $K_{k,k}$,
which are periodic shown in \cite{HKSS2017}.
In the end, we are concerned with bipartite regular graphs with four or five distinct adjacency eigenvalues.

Let $\G$ be a bipartite $k$-regular graph with $n$ vertices and the $A$-spectrum $\{ [\pm k]^{1}, [\pm \theta]^{a}, [0]^{b} \}$,
where $a \geq 1$ and $b \geq 0$.
Proposition~\ref{0925-1} derives $2k^2 + 2a \theta^2 = nk$.
We may assume that $\theta > 0$ without loss of generality,
and we have
\begin{equation} \label{0716-2}
\theta = \sqrt{\frac{nk-2k^2}{2a}}.
\end{equation}
On the other hand,
let the partite set of the vertex set be $V_1$ and $V_2$,
then we obtain
\begin{equation} \label{0717-1}
|V_1| = |V_2|.
\end{equation} 
Indeed, since $\G$ is bipartite,
we can display as
\[ A(\G) = \MM{O}{N}{N^{\top}}{O} \]
for some $p \times q$ matrix $N$.
The graph $\G$ is $k$-regular,
so $N \BM{1}_q = k \BM{1}_p$ and $N^{\top} \BM{1}_p = k \BM{1}_q$,
where $\BM{1}_r$ denotes the all-ones vector of size $r$.
We have $kp = \BM{1}_p^{\top} (N \BM{1}_q) = (\BM{1}_p^{\top} N) \BM{1}_q = kq$, i.e., $p = q$.
This implies $|V_1| = |V_2|$.

A complex number $\alpha$ is said to be an {\it algebraic integer}
if there exists a monic polynomial $p(x) \in \MB{Z}[x]$ such that $p(\alpha) = 0$.
Let $\Omega$ denote the set of algebraic integers.
Note that since the characteristic polynomial of the adjacency matrix
is a monic polynomial with integer coefficients,
the adjacency eigenvalues of a graph $\G$ are algebraic integers,
i.e.,
\begin{equation} \label{1110-2}
\Spec_{A}(\G) \subset \Omega.
\end{equation}
It is well-known that $\Omega$ is a subring of $\MB{C}$, and $\Omega \cap \MB{Q} = \MB{Z}$.
See \cite{J, L} for algebraic integers.
In addition, we will use integral bases of quadratic fields.
A positive integer $m >1$ is said to be {\it square-free}
if it is not divisible by $p^2$ for any prime number $p$. 

\begin{pro}[Proposition~2.34 in \cite{J}] \label{0930-1}
{\it
Let $m>1$ be a square-free integer.
Then
\[
\Omega \cap \MB{Q}(\sqrt{m}) = \begin{cases}
\{ p + q\sqrt{m} \mid p,q \in \MB{Z} \} \quad &\text{if $m \equiv 2,3 \pmod 4$}, \\
\{ p + \frac{1+\sqrt{m}}{2} q \mid p,q \in \MB{Z} \} \quad &\text{if $m \equiv 1 \pmod 4$}.
\end{cases} 
\]
}
\end{pro}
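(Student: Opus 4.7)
The plan is to characterize $\Omega \cap \MB{Q}(\sqrt{m})$ through the trace and norm on the quadratic extension $\MB{Q}(\sqrt m)/\MB{Q}$. Given $\alpha \in \MB{Q}(\sqrt m)$, I write $\alpha = a + b\sqrt m$ with $a, b \in \MB{Q}$ and invoke the standard fact that an element is an algebraic integer iff its minimal polynomial over $\MB{Q}$ lies in $\MB{Z}[x]$. When $b = 0$, this reduces to $\Omega \cap \MB{Q} = \MB{Z}$, already noted in the paragraph preceding the proposition. When $b \neq 0$, the minimal polynomial of $\alpha$ is
\[
(x - \alpha)(x - \bar\alpha) = x^2 - 2 a x + (a^2 - m b^2),
\]
where $\bar\alpha = a - b \sqrt m$ is the Galois conjugate. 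Hence $\alpha \in \Omega$ if and only if both $2a \in \MB{Z}$ and $a^2 - m b^2 \in \MB{Z}$.

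Setting $u := 2a \in \MB{Z}$, the second integrality condition gives $u^2 - 4 m b^2 \in 4 \MB{Z}$, and in particular $4 m b^2 \in \MB{Z}$. Writing $b = s/t$ in lowest terms, this forces $t^2 \mid 4 m$. Here is where the square-freeness of $m$ enters: any odd prime $p \mid t$ would imply $p^2 \mid m$, and $4 \mid t$ would imply $4 \mid m$, both of which are excluded. Therefore $t \in \{1, 2\}$, so $b = v/2$ for some $v \in \MB{Z}$. Consequently every element of $\Omega \cap \MB{Q}(\sqrt m)$ has the form $(u + v\sqrt m)/2$ with $u, v \in \MB{Z}$ satisfying
\[
u^2 - m v^2 \equiv 0 \pmod 4.
\]

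I would then resolve the congruence by cases on $m \bmod 4$, using that squares modulo $4$ are only $0$ or $1$. If $m \equiv 2, 3 \pmod 4$, checking the four parity classes of $(u, v)$ shows $u$ and $v$ must both be even, so $a, b \in \MB{Z}$ and $\alpha = p + q\sqrt m$ with $p := a$, $q := b$, giving the first displayed set. If $m \equiv 1 \pmod 4$, the congruence reduces to $u^2 \equiv v^2 \pmod 4$, which forces $u \equiv v \pmod 2$; setting $q := v$ and $p := (u - v)/2 \in \MB{Z}$ presents $\alpha$ as $p + q\,(1 + \sqrt m)/2$, giving the second displayed set. For the reverse inclusion, a direct substitution verifies that each displayed set lies in $\Omega$ because $2a$ and $a^2 - mb^2$ are manifestly integers in each case (the only nontrivial check being $(1 + \sqrt m)/2$ when $m \equiv 1 \pmod 4$, whose minimal polynomial is $x^2 - x + (1 - m)/4 \in \MB{Z}[x]$).

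The main obstacle is the square-freeness step that pins down the denominator of $b$: one must separate the prime $2$ from odd primes and propagate the hypothesis on $m$ through the divisibility $t^2 \mid 4 m$ carefully. Everything else is a short parity-of-squares computation modulo $4$ together with the standard algebraic integer criterion via the minimal polynomial.
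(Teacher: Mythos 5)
Your proof is correct. The paper does not prove this statement itself but cites it as Proposition~2.34 of the reference \cite{J}, and your argument --- the minimal-polynomial criterion $2a,\ a^2-mb^2\in\MB{Z}$, pinning the denominator of $b$ to $1$ or $2$ via square-freeness, and the mod-$4$ parity analysis --- is exactly the standard proof given there, carried out without gaps.
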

Note that $m$ is not square-free when $m \equiv 0 \pmod 4$.

\begin{lem} \label{0716-1}
{\it
Let $\G$ be a periodic graph.
If $\lambda \in \Spec_{T}(\G)$, then $2\lambda \in \Omega$.
}
\end{lem}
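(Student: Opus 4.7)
The plan is to combine the spectral mapping in Theorem~\ref{0925-3} with the periodicity criterion in Lemma~\ref{1110-1}, and observe that an eigenvalue of $T$ is, up to a factor of $2$, the sum of a root of unity and its inverse.

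First I would assume $\G$ is periodic with period $\tau$, so $U^{\tau} = I$. By Lemma~\ref{1110-1}, every $\mu \in \Spec_U(\G)$ satisfies $\mu^{\tau} = 1$; in particular each such $\mu$ is a $\tau$-th root of unity. Now fix $\lambda \in \Spec_T(\G)$. By Theorem~\ref{0925-3}, the number $\zeta := e^{i \arccos \lambda}$ lies in $\Spec_U(\G)$, so $\zeta$ is a root of unity.

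Next, by the definition of $\arccos$, we have the identity
\[
\lambda = \cos(\arccos \lambda) = \frac{e^{i \arccos \lambda} + e^{-i \arccos \lambda}}{2} = \frac{\zeta + \zeta^{-1}}{2}.
\]
Hence $2 \lambda = \zeta + \zeta^{-1}$. Since every root of unity is an algebraic integer (it satisfies $x^{\tau}-1 = 0$) and $\Omega$ is a ring, the sum $\zeta + \zeta^{-1}$ lies in $\Omega$. Therefore $2 \lambda \in \Omega$, as desired.

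There is really no substantial obstacle here: the proof is a direct assembly of the cited results, and the only small point is noticing that the eigenvalues of $T$ appear as half of $\zeta + \zeta^{-1}$ for a root of unity $\zeta$, which is exactly what the mapping $\lambda \mapsto e^{\pm i \arccos \lambda}$ in Theorem~\ref{0925-3} supplies. The doubling factor in the statement is precisely the $\tfrac{1}{2}$ in this cosine identity, which explains why $2\lambda$ (rather than $\lambda$ itself) is the natural object living in $\Omega$.
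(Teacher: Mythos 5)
Your proof is correct and follows essentially the same route as the paper: invoke Lemma~\ref{1110-1} to see that every $U$-eigenvalue is a root of unity (hence an algebraic integer), use Theorem~\ref{0925-3} to identify $e^{\pm i\arccos\lambda}$ as $U$-eigenvalues, and conclude $2\lambda = e^{i\arccos\lambda}+e^{-i\arccos\lambda}\in\Omega$ since $\Omega$ is a ring. No gaps.
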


\begin{proof}
By Lemma~\ref{1110-1}, there exists a positive integer $\tau$
such that $\Lambda^{\tau} = 1$ for any $\Lambda \in \Spec_{U}(\G)$.
The eigenvalue $\Lambda$ is a root of the monic polynomial $x^{\tau} - 1 \in \MB{Z}[x]$,
so it is an algebraic integer.
Let $\lambda \in \Spec_{T}(\G)$.
Theorem~\ref{0925-3} derives $e^{\pm i \arccos \lambda} \in \Spec_{U}(\G)$.
Since $\Omega$ is a ring,
we have $2\lambda  = e^{i \arccos \lambda} + e^{- i \arccos \lambda} \in \Omega$.
\end{proof}

\begin{thm} \label{0719-1}
{\it
Let $\G$ be a bipartite $k$-regular graph with the $A$-spectrum
$\{ [\pm k]^{1}, [\pm \theta]^{a}, [0]^{b} \}$,
where $a \geq 1$ and $b \geq 0$.
Then $\G$ is periodic if and only if
$k$ is even and $\theta \in \{ \frac{k}{2}, \frac{\sqrt{2}}{2}k, \frac{\sqrt{3}}{2}k \}$.
}
\end{thm}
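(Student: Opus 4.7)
The plan is to convert the periodicity condition into an algebraic-integer condition on $2\theta/k$ and then classify the possible $\theta$ by locating $\theta$ in a quadratic field and applying the integral-basis description in Proposition~\ref{0930-1}. By Lemma~\ref{0925-2} we have $T = A/k$, so
\[ \Spec_{T}(\G) = \{ [\pm 1]^1, [\pm \theta/k]^a, [0]^b \}, \]
and hence if $\G$ is periodic then Lemma~\ref{0716-1} forces $2\theta/k \in \Omega$.

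Before applying Proposition~\ref{0930-1}, I would first pin down the quadratic field containing $\theta$. From (\ref{0716-2}) we have $\theta^2 = (nk - 2k^2)/(2a) \in \MB{Q}$, while by (\ref{1110-2}) $\theta \in \Omega$. An algebraic integer with rational square is either itself a rational integer or has minimal polynomial $x^2 - \theta^2$ over $\MB{Q}$, which must have integer coefficients; in either case $\theta^2 \in \MB{Z}_{>0}$. Writing $\theta^2 = s^2 d$ with $s \in \MB{Z}_{\geq 1}$ and $d$ a positive square-free integer (where $d = 1$ captures the rational case), we obtain $\theta = s\sqrt{d} \in \MB{Q}(\sqrt{d})$. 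Note also that $0 < \theta < k$, since $\theta$ is a distinct positive eigenvalue strictly below the Perron eigenvalue $k$.

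The forward direction then reduces to a short case split on $d$. If $d = 1$, then $2\theta/k \in \Omega \cap \MB{Q} = \MB{Z}$, so $k \mid 2\theta$; together with $0 < 2\theta < 2k$ this forces $\theta = k/2$ and $k$ even. If $d \equiv 2, 3 \pmod 4$, Proposition~\ref{0930-1} gives $2\theta/k = p + q\sqrt{d}$ for some $p, q \in \MB{Z}$, so $p = 0$ and $q = 2s/k \in \MB{Z}_{\geq 1}$; then $\theta = (qk/2)\sqrt{d}$, and $\theta < k$ forces $q\sqrt{d} < 2$, hence $q = 1$ and $d \in \{2, 3\}$, giving $\theta \in \{\tfrac{\sqrt{2}}{2}k, \tfrac{\sqrt{3}}{2}k\}$ with $k$ even. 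If $d \equiv 1 \pmod 4$ with $d > 1$, writing $2\theta/k = (p + q/2) + (q/2)\sqrt{d}$ forces the rational part to vanish and $q$ to be even, and an identical bound analysis again requires $\sqrt{d} < 2$; but the smallest square-free $d \equiv 1 \pmod 4$ exceeding $1$ is $5$, a contradiction.

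For the converse, if $\theta/k \in \{1/2, \sqrt{2}/2, \sqrt{3}/2\}$, then $\arccos(\theta/k) \in \{\pi/3, \pi/4, \pi/6\}$, so every $e^{\pm i\arccos \lambda}$ appearing in Theorem~\ref{0925-3} (including the degenerate cases $\lambda = \pm 1$ and $\lambda = 0$) is a root of unity, and Lemma~\ref{1110-1} gives periodicity. The main obstacle is the case $d \equiv 1 \pmod 4$: one has to separate the rational and irrational parts of $2\theta/k$ correctly with respect to the basis $\{1, (1+\sqrt{d})/2\}$ before the inequality $\theta < k$ can be invoked to rule it out. Everything else is a mechanical check of the bound $q\sqrt{d} < 2$.
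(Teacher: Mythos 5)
Your proposal is correct and follows essentially the same route as the paper: reduce periodicity to $2\theta/k \in \Omega$ via Lemma~\ref{0716-1}, locate $\theta$ in a quadratic field, and apply Proposition~\ref{0930-1} with the bound $0 < 2\theta/k < 2$ to force $\theta \in \{\tfrac{k}{2}, \tfrac{\sqrt{2}}{2}k, \tfrac{\sqrt{3}}{2}k\}$ and $k$ even. The only differences are cosmetic (you first normalize $\theta^2$ to an integer, and in the $d \equiv 1 \pmod 4$ case you derive the contradiction from the parity of $q$ rather than from $p = -\tfrac{1}{2} \notin \MB{Z}$).
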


\begin{proof}
First, we show the sufficient condition for $\theta = \frac{k}{2}$.
The other cases are shown in the same way.
By Lemma~\ref{0925-2}, we have $\Spec_{T}(\G) = \{[\pm 1]^1, [\pm \frac{1}{2}]^a, [0]^b\}$.
Theorem~\ref{0925-3} implies
$\Spec_{U}(\G) \subset \{ \pm 1, e^{\pm \frac{\pi}{3}i}, e^{\pm \frac{\pi}{2}i} \}$,
where the multiplicities are ignored.
Thus, there exists a positive integer $\tau$ such that
$\lambda^{\tau} = 1$ for any $\lambda \in \Spec_{U}(\G)$,
and hence $\G$ is periodic by Lemma~\ref{1110-1}.

We show the necessary condition.
We may assume that $\theta > 0$ without loss of generality.
We have $\frac{\theta}{k} \in \Spec_{T}(\G)$ since $\G$ is $k$-regular.
Lemma~\ref{0716-1} derives $\frac{2 \theta}{k} \in \Omega$.

Consider the case $\theta \in \MB{Q}$.
We have $\frac{2 \theta}{k} \in \Omega \cap \MB{Q} = \MB{Z}$.
Lemma~\ref{0925-2} implies 
$0 < \frac{\theta}{k} < 1$.
Since $\frac{2 \theta}{k}$ is an integer, 
we have $\frac{2\theta}{k} = 1$, i.e., $\theta = \frac{k}{2}$.
From (\ref{1110-2}), we have 
$\theta \in \Omega \cap \MB{Q} = \MB{Z}$, and hence $k$ is even.

Next, we consider the case $\theta \not\in \MB{Q}$.
By (\ref{0716-2}), there exist a square-free integer $m > 1$ and $r \in \MB{Q}$
such that $\theta = r \sqrt{m}$.
Thus, we have
\begin{equation} \label{1126-1}
\frac{2 \theta}{k} = \frac{2r}{k} \sqrt{m} \in \Omega \cap \MB{Q}(\sqrt{m}).
\end{equation}
We claim that $m \not\equiv 1 \pmod 4$.
Suppose $m \equiv 1 \pmod 4$.
By Proposition~\ref{0930-1}, there exists $p,q \in \MB{Z}$ such that
\[ \frac{2r}{k}\sqrt{m} = p + \frac{1+\sqrt{m}}{2}q. \]
Since $1, \sqrt{m}$ are linearly independent over $\MB{Q}$,
we have
\begin{equation} \label{0716-4}
p+\frac{q}{2} = 0
\end{equation}
and
\begin{equation} \label{0716-3}
\frac{2r}{k} = \frac{q}{2}.
\end{equation}
Since $0 < \frac{2 \theta}{k} < 2$,
Equalities~(\ref{1126-1}) and~(\ref{0716-3}) imply
$0 < q\sqrt{m} < 4$.
Since $m \geq 5$, we have $0 < q < \frac{4}{\sqrt{m}} < 2$,
and hence $q=1$. 
Equality~(\ref{0716-4}) shows that $p = -\frac{1}{2}$, which contradicts to $p \in \MB{Z}$.
Now, we have $m \equiv 2,3 \pmod 4$.
By Proposition~\ref{0930-1}, there exists $p,q \in \MB{Z}$ such that $\frac{2r}{k}\sqrt{m} = p + q\sqrt{m}$.
Since $1, \sqrt{m}$ are linearly independent over $\MB{Q}$,
we have $p = 0$ and
\begin{equation} \label{1126-2}
q = \frac{2r}{k}.
\end{equation}
Since $0 < \frac{2 \theta}{k} < 2$,
Equalities~(\ref{1126-1}) and~(\ref{1126-2}) imply $0 < q\sqrt{m} < 2$.
Since $m \geq 2$, we have $0 < q < \frac{2}{\sqrt{m}} < 2$.
This implies $q=1$, and hence $(q,m) = (1,2), (1,3)$.
Therefore, we have $\frac{2\theta}{k} = \frac{2r}{k}\sqrt{m} = q\sqrt{m} \in \{\sqrt{2}, \sqrt{3}\}$,
i.e., $\theta \in \{\frac{\sqrt{2}}{2}k, \frac{\sqrt{3}}{2}k \}$.
The eigenvalue $\theta$ is also in $\Omega \cap \MB{Q}(\sqrt{m})$,
so $k$ must be even from Proposition~\ref{0930-1}. 
\end{proof}

\section{Four distinct adjacency eigenvalues} \label{1028-4}
In this section,
we consider a bipartite regular graph $\G$ with four distinct adjacency eigenvalues.
As mentioned in Proposition~15.1.3 in \cite{BH},
it is known that such a graph is the incidence graph of a symmetric 2-design.
Therefore there are a very large number of connected bipartite regular graphs with four distinct adjacency eigenvalues, of which we will show that only $C_6$ is periodic.
Let the $A$-spectrum of $\G$ be $\{ [\pm k]^{1}, [\pm \theta]^{a} \}$, where $a \geq 1$.
Let $n$ be the number of vertices of $\G$.
Then we have $a = \frac{n}{2} - 1$.
From the previous section,
if $\G$ is periodic, then $\theta \in \{ \frac{k}{2}, \frac{\sqrt{2}}{2}k, \frac{\sqrt{3}}{2}k \}$.

\begin{thm}
{\it
Let $\G$ be a bipartite $k$-regular graph with the $A$-spectrum
$\{ [\pm k]^{1}, [\pm \theta]^{\frac{n}{2} - 1}\}$,
where $n$ is the number of vertices of $\G$.
Then $\G$ is periodic if and only if $\G$ is isomorphic to the cycle graph $C_6$.
}
\end{thm}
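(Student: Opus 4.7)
The sufficiency is direct from Theorem~\ref{0719-1}: the cycle $C_6$ is bipartite $2$-regular with $A$-spectrum $\{[\pm 2]^1, [\pm 1]^2\}$, and since $\theta = 1 = k/2$ with $k = 2$ even, Theorem~\ref{0719-1} applies and gives periodicity.

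For the necessity, my strategy is to combine the trace identity of Proposition~\ref{0925-1} with the restriction on $\theta$ from Theorem~\ref{0719-1}, and then solve a Diophantine equation in each of three cases. Applying Proposition~\ref{0925-1} to the spectrum $\{[\pm k]^1, [\pm \theta]^{n/2-1}\}$ gives
\[
2k^2 + (n-2)\theta^2 = nk.
\]
If $\G$ is periodic, Theorem~\ref{0719-1} forces $k$ to be a positive even integer and $\theta \in \{k/2,\ (\sqrt{2}/2)k,\ (\sqrt{3}/2)k\}$; moreover $n$ is even with $n \geq 4$, since the multiplicity $n/2 - 1$ is a positive integer and the two partite sets have equal size. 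Substituting each of the three candidate values of $\theta$ into the trace identity reduces it to
\[
k(n+6) = 4n, \qquad k(n+2) = 2n, \qquad k(3n+2) = 4n,
\]
respectively.

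I would then dispose of the second and third equations by elementary estimates: the second forces $(n+2) \mid 4$, impossible for $n \geq 4$, while the third yields $k = 4n/(3n+2) < 4/3$, incompatible with $k \geq 2$. The first equation is equivalent to $(n+6) \mid 24$, and together with $n \geq 4$ and $n$ even this leaves only $n \in \{6, 18\}$, giving $k = 2$ or $k = 3$ respectively; imposing the parity of $k$ leaves only $(n,k) = (6, 2)$. The corresponding spectrum $\{[\pm 2]^1, [\pm 1]^2\}$ belongs to a connected bipartite $2$-regular graph on six vertices, and the only such graph is $C_6$. I do not foresee a real obstacle: the entire argument is a routine case analysis once the trace constraint is paired with Theorem~\ref{0719-1}.
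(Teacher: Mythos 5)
Your proof is correct, and it reaches the goal by a somewhat different route than the paper. The paper writes $A$ in bipartite block form, diagonalizes $NN^{\top}-\theta^{2}I$ with an explicit orthonormal eigenbasis to obtain $NN^{\top}=\theta^{2}I+\tfrac{2(k^{2}-\theta^{2})}{n}J_{n/2}$, and reads off the diagonal entry to get $k=\theta^{2}+\tfrac{2(k^{2}-\theta^{2})}{n}$; from this it extracts the inequality $k>\theta^{2}$, which kills $\theta\in\{\tfrac{\sqrt2}{2}k,\tfrac{\sqrt3}{2}k\}$ (forcing $k<2$) and then forces $k<4$, hence $k=2$ and $n=6$. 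Your starting identity $2k^{2}+(n-2)\theta^{2}=nk$ is algebraically the same equation (multiply the paper's diagonal identity by $n$), but you obtain it directly from the trace formula of Proposition~\ref{0925-1} rather than from the $NN^{\top}$ computation, and you close the case analysis by divisibility ($(n+2)\mid 4$, $(n+6)\mid 24$, and the bound $k<4/3$) instead of the inequality $k>\theta^{2}$. All three of your reductions $k(n+6)=4n$, $k(n+2)=2n$, $k(3n+2)=4n$ check out, as do the exclusions (using $n\geq 4$ even and $k\geq 2$ even from Theorem~\ref{0719-1}), leaving only $(n,k)=(6,2)$ and hence $C_6$. What your version buys is economy: it avoids the eigenvector/diagonalization step entirely and is purely arithmetic. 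What the paper's version buys is the structural identity for $NN^{\top}$ (the incidence-graph-of-a-design structure), which it reuses in spirit in Lemma~\ref{0726-1} for the five-eigenvalue case; for the present theorem alone your argument is a clean and complete substitute.
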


\begin{proof}
It is well-known that the cycle graph $C_6$ has the $A$-spectrum $\{ [\pm 2]^{1}, [\pm 1]^{2} \}$,
which is periodic.
We show that the assumed periodic graph is determined to $C_6$.
Since $\G$ is bipartite,
its adjacency matrix is displayed as
\[ A = A(\G) = \MM{O}{N}{N^{\top}}{O} \]
for some matrix $N$.
This matrix $N$ is square matrix of size $\frac{n}{2}$ from (\ref{0717-1}).
We have
\begin{align*}
\{ [k^2]^{2}, [\theta^2]^{n-2} \} &=
\Spec(A^2) \\
&= \Spec(NN^{\top}) \cup \Spec(N^{\top} N) \\
&= \Spec(NN^{\top}) \cup \Spec(NN^{\top}). 
\end{align*}
This implies $\Spec(NN^{\top}) = \{ [k^2]^{1}, [\theta^2]^{\frac{n}{2} - 1} \}$,
that is,
\begin{equation}
\Spec(NN^{\top} - \theta^2 I) = \{ [k^2 - \theta^2]^{1}, [0]^{\frac{n}{2} - 1} \}.
\end{equation}
We take an eigenvector $\sqrt{\frac{2}{n}} \BM{1}_{\frac{n}{2}}$ of norm $1$
associated to the eigenvalue $k^2 - \theta^2$,
and eigenvectors $x_1, \dots, x_{\frac{n}{2} - 1}$ associated to the eigenvalue $0$
such that $x_{i}^{\top}x_j = \delta_{i,j}$.
The matrix $P=[\sqrt{\frac{2}{n}} \BM{1}_{\frac{n}{2}}, x_1, \dots, x_{\frac{n}{2} - 1}]$
is an orthogonal matrix and diagonalizes $NN^{\top} - \theta^2 I$.
Thus we have
\[ NN^{\top} - \theta^2 I = P \diag(k^2 - \theta^2, 0, \dots, 0) P^{\top}
= \frac{2(k^2 - \theta^2)}{n} J_{\frac{n}{2}}, \]
that is,
\begin{equation} \label{0717-2}
NN^{\top} = \theta^2 I + \frac{2(k^2 - \theta^2)}{n} J_{\frac{n}{2}}
\end{equation}
Since $\G$ is $k$-regular and $N$ is $\{0,1\}$-matrix, we have
\[(NN^{\top})_{x,x} = \sum_{z \in V(\G)} N_{x,z} N_{x,z} = \sum_{z \in V(\G)} N_{x,z} = k \]
for any vertex $x \in V(\G)$.
The $(x,x)$ entries of both sides of Equality~(\ref{0717-2}) are
$k = \theta^2 + \frac{2(k^2 - \theta^2)}{n}$.
In particular, we have 
\begin{equation} \label{0719-2}
k > \theta^2.
\end{equation}
Suppose $\theta \in \{\frac{\sqrt{2}}{2}k, \frac{\sqrt{3}}{2}k \}$.
Since $\theta \geq \frac{\sqrt{2}}{2}k$,
Inequality~(\ref{0719-2}) derives  $k < 2$.
However, $k$ is even by Theorem~\ref{0719-1}.
This is impossible.
Thus $\theta = \frac{k}{2}$.
By (\ref{0719-2}) again, we have $k < 4$, and hence $k=2$ because $k$ is even.
The $A$-spectrum of $\G$ is determined to be $\{ [\pm 2]^{1}, [\pm 1]^{\frac{n}{2} - 1} \}$.
Equality~(\ref{0716-2}) implies $n=6$.
Since $\G$ is connected,
the graph is determined to be the cycle graph $C_6$.
\end{proof}

\section{Five distinct adjacency eigenvalues} \label{1028-5}

In contrast to bipartite regular graphs with four distinct adjacency eigenvalues,
we can construct a large number of periodic graphs with five distinct adjacency eigenvalues.
On the other hand, of the feasible spectra,
there are many graphs whose existence is unknown.
Using tools of spectral graph theory,
we discuss feasible periodic graphs with five distinct adjacency eigenvalues.

\subsection{Construction}

Let $\G$ be a graph.
We write $\G \otimes J_m$ as the graph defined by
$A(\G \otimes J_m) = A(\G) \otimes J_m$.
If $m=1$, we simply regard $\G \otimes J_1$ as $\G$.
If a graph $\G$ is $k$-regular and has the $A$-spectrum
$\{ [k]^{1}, [\lambda_1]^{f_1}, \dots, [\lambda_s]^{f_s} \}$,
then the graph $\G \otimes J_m$ is $km$-regular, and
\begin{align*}
\Spec_{A}(\G \otimes J_m)
&= \{ [km]^{1}, [\lambda_1 m]^{f_1}, \dots, [\lambda_s m]^{f_s}, [0]^{nm-n} \}, \\
\Spec_{T}(\G \otimes J_m)
&= \left\{ [1]^{1}, \left[ \frac{\lambda_1}{k} \right]^{f_1}, \dots, \left[ \frac{\lambda_s}{k} \right]^{f_s}, [0]^{nm-n} \right \},
\end{align*}
where $n$ is the number of vertices of $\G$.
In particular, if $A(\G)$ has the eigenvalue 0,
then $T(\G)$ and $T(\G \otimes J_m)$ have the same spectrum except for the multiplicities.
From this observation,
it follows that  $C_6 \otimes J_m$ and $C_8 \otimes J_m$ are also periodic graphs.
Note that the $U$-spectrum of $C_6 \otimes J_m$ ignoring its multiplicities is
$\{ \pm 1, \pm i, e^{\pm \frac{\pi}{3}i}, e^{\pm \frac{2\pi}{3}i} \}$ from Theorem~\ref{0925-3}.
Thus, the period is $12$.
Similarly, the period of $C_8 \otimes J_m$ is $8$.

Another construction method is to use graph products.
Let $\G$ and $\D$ be graphs with $n$ and $m$ vertices, respectively.
The {\it Cartesian product} $\G \sikaku \D$ is the graph defined by
$A(\G \sikaku \D) = A(\G) \otimes I_m + I_n \otimes A(\D)$.
The {\it Kronecker product} $\G \otimes \D$ is the graph defined by
$A(\G \otimes \D) = A(\G) \otimes A(\D)$.
In particular, the graph $\G \otimes K_2$ is called the {\it bipartite double} of $\G$.
The $A$-spectrum of $\G \otimes K_2$ is $\Spec_{A}(\G) \cup -\Spec_{A}(\G)$.
These graph products can also be used to construct periodic graphs.
For example, the line graph of the $3$-dimensional hypercube $Q_3$, i.e.,
$L(Q_3)$ has the $A$-spectrum $\{ [4]^1, [2]^3, [0]^3, [-2]^5 \}$.
Thus, its bipartite double $L(Q_3) \otimes K_2$ has the $A$-spectrum $\{ [\pm 4]^1, [\pm 2]^8, [0]^6\}$,
which is a connected bipartite regular periodic graph with five distinct adjacency eigenvalues.
For more information on graph products, see \cite{BH, K2017}.

\subsection{Feasible spectra}

From Theorem~\ref{0719-1},
if a bipartite $k$-regular graph $\G$ with five distinct adjacency eigenvalues is periodic,
the second largest eigenvalue can only take one of the three values, and $k$ must be even.
However, there are still infinite possibilities for the number of vertices and multiplicities of eigenvalues.
We need to narrow down candidates for $A$-spectra of graphs.

\begin{lem} \label{0726-1}
{\it
Let $\G$ be a bipartite $k$-regular graph with the $A$-spectrum
$\{ [\pm k]^{1}, [\pm \theta]^{a}, [0]^{b} \}$,
where $a, b \geq 1$.
If we display the adjacency matrix $A$ of $\G$ as
\[ A = \MM{O}{N}{N^{\top}}{O}, \]
then we have $NN^{\top}N = \theta^2 N + \frac{2k}{n}(k^2 - \theta^2) J_{\frac{n}{2}}$,
where $n$ is the number of vertices.
}
\end{lem}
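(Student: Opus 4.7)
The plan is to evaluate a carefully chosen polynomial in $A$ that isolates the $\pm k$ eigenspaces and then read off the upper right block. Since the quantity $NN^\top N - \theta^2 N$ sits in the upper right block of the cubic polynomial $p(A) = A^3 - \theta^2 A = A(A-\theta I)(A+\theta I)$, this is the natural object to study: $p$ vanishes on the eigenspaces for $\theta, 0, -\theta$, so by the spectral decomposition
\[ p(A) = p(k) E_k + p(-k) E_{-k} = k(k^2-\theta^2)(E_k - E_{-k}), \]
where $E_{\pm k}$ are the orthogonal projections onto the $\pm k$-eigenspaces.

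The next step is to write down these projections explicitly using bipartiteness and regularity. Since $\Gamma$ is $k$-regular, $v_k = \tfrac{1}{\sqrt n} \mathbf{1}_n$ is the unit eigenvector for $k$; since $\Gamma$ is bipartite with $|V_1|=|V_2|=n/2$ by (\ref{0717-1}), $v_{-k} = \tfrac{1}{\sqrt n}(\mathbf{1}_{n/2};-\mathbf{1}_{n/2})$ is the unit eigenvector for $-k$. A direct rank one computation then gives
\[ E_k - E_{-k} = \frac{2}{n}\MM{O}{J_{n/2}}{J_{n/2}}{O}, \]
so $p(A) = \frac{2k(k^2-\theta^2)}{n}\MM{O}{J_{n/2}}{J_{n/2}}{O}$.

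Finally I compare this with the block form of $p(A)$. Using $A^2 = \diag(NN^\top, N^\top N)$, we get $A^3 = \MM{O}{NN^\top N}{N^\top N N^\top}{O}$, and hence the upper right block of $p(A) = A^3 - \theta^2 A$ equals $NN^\top N - \theta^2 N$. Matching the two expressions for this block yields exactly the claimed identity $NN^\top N = \theta^2 N + \tfrac{2k}{n}(k^2-\theta^2)J_{n/2}$.

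There is no real obstacle here: the only small care point is choosing the correct cubic polynomial $p(x) = x(x^2-\theta^2)$ rather than the full Hoffman product $q(x) = x(x^2-\theta^2)(x+k)$, so that the result is a degree-3 expression in $A$ whose upper right block is precisely the product $NN^\top N$ we want to describe. Once that choice is made, everything reduces to writing the spectral projections for $\pm k$ in block form, which is immediate from bipartiteness and regularity.
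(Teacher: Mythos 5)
Your proof is correct and is essentially the paper's own argument: the paper likewise evaluates $M=(A^2-\theta^2 I)A$, identifies it as $k(k^2-\theta^2)(\alpha\alpha^{\top}-\beta\beta^{\top})$ using the rank-one $\pm k$ eigenprojections built from $\bm{1}_{n/2}$, and compares the $(1,2)$ blocks. The only cosmetic difference is that you phrase the key step via the spectral decomposition $p(A)=p(k)E_k+p(-k)E_{-k}$ while the paper writes out the orthogonal diagonalization explicitly.
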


\begin{proof}
The two vectors
\[ \alpha = \frac{1}{\sqrt{n}} \begin{bmatrix} \BM{1}_{\frac{n}{2}} \\ \BM{1}_{\frac{n}{2}} \end{bmatrix},
\qquad
\beta = \frac{1}{\sqrt{n}} \begin{bmatrix} \BM{1}_{\frac{n}{2}} \\ -\BM{1}_{\frac{n}{2}} \end{bmatrix}
 \]
are eigenvectors of $A$ associated to $k, -k$ with norm $1$, respectively.
Let $x_1, \dots, x_{n-2}$ be eigenvectors of $A$ associated to $\lambda \not\in \{k, -k\}$
such that $x_i^{\top}x_j = \delta_{i,j}$.
Define $M = (A^2 - \theta^2 I_n )A$.
Then $x_1, \dots, x_{n-2}$ are eigenvectors of $M$ associated to $0$.
Consider $Q = [ \alpha, \beta, x_1, \dots, x_{n-2} ]$.
The matrix $Q$ is an orthogonal matrix and diagonalizes $M$, so we have
\begin{align*}
M &= Q \diag \left( (k^2-\theta^2)k, -(k^2-\theta^2)k, 0 \dots, 0 \right) Q^{\top} \\
&= (k^2-\theta^2)k ( \alpha \alpha^{\top} - \beta \beta^{\top} ) \\
&= \frac{2k(k^2-\theta^2)}{n} \MM{O}{J_{\frac{n}{2}}}{J_{\frac{n}{2}}}{O}.
\end{align*}
On the other hand,
\begin{align*}
M &= \left( A^2 - \theta^2 I_n \right)A \\
&= \left( \MM{O}{N}{N^{\top}}{O}^2 - \theta^2 I_n \right) \MM{O}{N}{N^{\top}}{O} \\
&= \MM{O}{NN^{\top}N - \theta^2 N}{N^{\top}NN^{\top} - \theta^2N^{\top}}{O}.
\end{align*}
Comparing (1,2)-block, we obtain the statement.
\end{proof}

\begin{lem} \label{0923-3}
{\it
With the above notation,
we have $n \leq 2k(k^2 - \theta^2)$.
}
\end{lem}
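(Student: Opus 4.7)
The plan is to extract the inequality directly from the matrix identity in Lemma~\ref{0726-1} by evaluating it at an entry $(x,y)$ corresponding to a non-edge, i.e., a pair $x \in V_1$, $y \in V_2$ with $N_{x,y} = 0$. At such an entry the identity collapses to
\[
(NN^{\top}N)_{x,y} = \frac{2k(k^2 - \theta^2)}{n},
\]
so the desired bound $n \leq 2k(k^2-\theta^2)$ is equivalent to this common constant being at least $1$.

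The proof then has two routine verifications and one edge-case check. Integrality of the left-hand side is immediate because $N$ is a $\{0,1\}$-matrix, so $(NN^{\top}N)_{x,y}$ counts walks of length three from $x$ to $y$ and is a non-negative integer. Positivity of the right-hand side follows from $k > 0$ together with $0 < \theta < k$, which gives $k^2 - \theta^2 > 0$. Combining the two, $\frac{2k(k^2-\theta^2)}{n}$ is a positive integer, hence $\geq 1$, and rearranging yields the claim.

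The only step that needs a brief argument is ensuring that at least one non-edge exists between $V_1$ and $V_2$, so that the evaluation above is legitimate. If no such non-edge existed, then $N = J_{n/2}$ and $\G = K_{n/2,n/2}$; combined with $k$-regularity this forces $n/2 = k$, but $\Spec_A(K_{k,k}) = \{[\pm k]^1, [0]^{2k-2}\}$ has only three distinct eigenvalues, contradicting the hypothesis $a, b \geq 1$ (which encodes five distinct eigenvalues). I do not anticipate a serious obstacle: the whole argument is really a ``the common off-edge entry must be a positive integer'' observation, and the $K_{k,k}$ edge case is eliminated by counting distinct eigenvalues.
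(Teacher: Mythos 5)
Your proof is correct, and while it shares the paper's overall skeleton (evaluate the identity of Lemma~\ref{0726-1} at a non-edge across the bipartition), the key step is genuinely different. The paper proves $(NN^{\top}N)_{x,z} \geq 1$ combinatorially: it locates a vertex $y_i$ at distance $2$ from $x$ with $\G_1(y_i) \neq \G_1(x)$, picks $z \in \G_1(y_i) \setminus \G_1(x)$, and exhibits an explicit walk of length $3$ from $x$ to $z$. You instead get the lower bound for free from the observation that $(NN^{\top}N)_{x,y}$ is a non-negative integer (as $N$ is a $\{0,1\}$-matrix) equal to the strictly positive constant $\frac{2k(k^2-\theta^2)}{n}$, so it must be at least $1$. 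Consequently you only need the existence of \emph{some} non-edge, which you correctly rule in by excluding $K_{n/2,n/2}$ via the eigenvalue count --- the same exclusion the paper performs, though the paper has to extract slightly more structure from it (a distance-$2$ vertex with a differing neighbourhood). Your route is more economical and, as a bonus, actually yields the stronger divisibility condition $n \mid 2k(k^2-\theta^2)$ (note $\theta^2 \in \MB{Z}$ since it is a rational algebraic integer), whereas the paper's version makes the combinatorial meaning of the entry --- a count of walks of length $3$ --- explicit. Both arguments rely on $0 < \theta < k$, which holds by Perron--Frobenius for a connected $k$-regular graph, so there is no gap.
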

\begin{proof}
Pick $x \in V(\G)$.
Let $\G_j(x)$ be the set of vertices at distance $j$ from $x$.
Display as $\G_2(x) = \{ y_1, \dots, y_t \}$.
Suppose that $\G_1(x) = \G_1(y_i)$ for any $y_i \in \G_2(x)$.
Since $\G$ is connected,
$\G$ is isomorphic to the complete bipartite graph $K_{k,k}$.
However, this contradicts the assumption of the $A$-spectrum of $\G$.
Thus, there exists $y_i \in \G_2(x)$ such that $\G_1(y_i) \neq \G_1(x)$.
Since $\G$ is $k$-regular, neither $\G_1(y_i)$ nor $\G_1(x)$ is included in the other.
Thus, we have $\G_1(y_i) \setminus \G_1(x) \neq \emptyset$.
Pick $z \in \G_1(y_i) \setminus \G_1(x)$.
The pair of vertices $x,z$ satisfies $N_{x,z} = 0$ and $(NN^{\top}N)_{x,z} \geq 1$.
By Lemma~\ref{0726-1},
\[ 1 \leq (NN^{\top}N )_{x,z} = \left( \theta^2 N + \frac{2k}{n}(k^2 - \theta^2) J_{\frac{n}{2}} \right)_{x,z}
= \frac{2k}{n}(k^2 - \theta^2). \]
We have the statement.
\end{proof}

A graph $\G$ with the adjacency matrix $A$ is said to be {\it walk-regular}
if $(A^r)_{x,x}$ is independent of the choice of $x$ for each positive integer $r$.
Walk-regular graphs are regular since $\deg x = (A^2)_{x,x}$ is a constant.
See \cite{GM} for more information on walk-regular graphs.
Walk-regularity provides conditions for the existence of graphs.
Let the constant $(A^r)_{x,x}$ be $c_r$ for a positive integer $r$,
and let the adjacency eigenvalues be $k = \lambda_1 \geq \lambda_2 \geq \dots \geq \lambda_n$.
We have
\[ \sum_{i = 1}^n \lambda_i^r = \tr (A^r) = \sum_{x \in V(\G)} (A^r)_{x,x} = n c_r, \]
where $n$ is the number of vertices.
The constant $c_r$ is the number of closed walks, so
\[ c_r = \frac{1}{n} \sum_{i = 1}^n \lambda_i^r \]
is a non-negative integer.
This condition restricts feasible spectra of walk-regular graphs.
Van Dam pointed out that regular graphs with four adjacency eigenvalues are walk-regular \cite{vD}.
Koledin and Stani\'{c} showed that
regular bipartite graphs with three distinct non-negative adjacency eigenvalues are walk-regular \cite{KS}.
Although a proof is similar, the graphs we consider are also walk-regular.

\begin{lem} \label{0923-4}
{\it
Let $\G$ be a bipartite $k$-regular graph with the $A$-spectrum
$\{ [\pm k]^{1}, [\pm \theta]^{a}, [0]^{b} \}$,
where $a, b \geq 1$.
Then $\G$ is walk-regular.
}
\end{lem}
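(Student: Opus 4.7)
My plan is to reduce walk-regularity to a finite check via Hoffman's polynomial (Proposition~\ref{0930-2}) and then run an induction. Since the distinct adjacency eigenvalues of $\G$ are $k, \theta, 0, -\theta, -k$, Hoffman's theorem yields a concrete quartic identity
\[
A(A-\theta I)(A+\theta I)(A+kI) \;=\; \frac{q(k)}{n} J_n,
\]
where $q(x) = x(x^2-\theta^2)(x+k)$. Expanding the left-hand side produces
\[
A^4 \;=\; -kA^3 + \theta^2 A^2 + k\theta^2 A + \frac{q(k)}{n} J_n.
\]
This is the single structural input I need, and from here the argument is a standard diagonal-propagation.

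Next I would check the base cases $r = 0,1,2,3$ directly. Walks of length $0$ give the trivial constant $1$; length $1$ is $0$ because $\G$ is simple; length $2$ is $k$ because $\G$ is $k$-regular; length $3$ is $0$ because $\G$ is bipartite and so admits no closed odd walks. Thus $(A^r)_{x,x}$ is independent of $x$ for $r \in \{0,1,2,3\}$.

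For the inductive step, I would multiply the quartic identity on the left by $A^{r-4}$ and use $A^{r-4} J_n = k^{r-4} J_n$ (which follows from $\G$ being $k$-regular) to obtain, for each $r \geq 4$,
\[
A^r \;=\; -kA^{r-1} + \theta^2 A^{r-2} + k\theta^2 A^{r-3} + \frac{k^{r-4} q(k)}{n} J_n.
\]
Every term on the right has constant diagonal: the first three by the inductive hypothesis, and $J_n$ tautologically. Therefore $(A^r)_{x,x}$ is constant in $x$, completing the induction and establishing walk-regularity.

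There is no serious obstacle here; the only thing to be careful about is that Hoffman's polynomial uses exactly the non-principal distinct eigenvalues $\theta, 0, -\theta, -k$, which is why the hypothesis $a, b \geq 1$ matters, and that the base case $(A^3)_{x,x}=0$ genuinely requires bipartiteness rather than just regularity. Otherwise, the argument is a routine consequence of the fact that a graph with $s+1$ distinct eigenvalues satisfies a degree-$s$ polynomial relation modulo $J_n$, combined with $k$-regularity to propagate through $J_n$.
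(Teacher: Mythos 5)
Your proof is correct and follows essentially the same route as the paper: both rest on the Hoffman polynomial identity $q(A)=\frac{q(k)}{n}J_n$ with $q(x)=x(x+k)(x^2-\theta^2)$, together with $AJ_n=kJ_n$ and the vanishing of odd-power diagonals by bipartiteness. The paper phrases this as expressing $A^r$ in the span of $\{A^3,A^2,A,I_n,J_n\}$ and reading off the diagonal, while you run the equivalent recurrence as an induction on $r$; the content is the same.
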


\begin{proof}
Let $n$ be the number of vertices of $\G$,
and let $A$ be the adjacency matrix.
By Proposition~\ref{0930-2}, we have $q(A) = \frac{q(k)}{n}J_n$,
where $q(x) = x(x+k)(x^2-\theta^2)$.
In particular,
$A^4$ can be expressed as a linear combination of $A^3, A^2, A, I_n$, and $J_n$
with rational coefficients.
Thus for any positive integer $r$,
there exist $\alpha_1, \alpha_2, \alpha_3, \alpha_4, \alpha_5 \in \MB{Q}$ such that
$A^r = \alpha_1 A^3 + \alpha_2 A^2 + \alpha_3 A + \alpha_4 I_n + \alpha_5 J_n$.
On the other hand, $(A^3)_{x,x} = A_{x,x} = 0$ for any $x \in V(\G)$ since $\G$ is bipartite.
We have $(A^r)_{x,x} = k\alpha_2 + \alpha_4 + \alpha_5$,
and hence $\G$ is walk-regular.
\end{proof}

\begin{pro} \label{0923-5}
{\it
Let $\G$ be a bipartite $k$-regular graph with $n$ vertices.
Suppose that the $A$-spectrum is $\Spec_{A}(\G) = \{ [\pm k]^{1}, [\pm \theta]^{a}, [0]^{b} \}$,
where $a, b \geq 1$.
Then we have the following.
\begin{enumerate}[(i)]
\item $a = \frac{nk-2k^2}{2\theta^2}$ and $b = n-2 - \frac{nk-2k^2}{\theta^2}$.
In particular,
$\frac{nk-2k^2}{2\theta^2}$ and $n-2 - \frac{nk-2k^2}{\theta^2}$ are positive integers;
\item $\frac{2(k^2+\theta^2)}{k} \leq n \leq 2k(k^2-\theta^2)$; and
\item For any positive integer $r$,
we have $\frac{1}{n} (2 \cdot k^{2r} + (nk-2k^2)\theta^{2r-2}) \in \MB{Z}$.
\end{enumerate}
}
\end{pro}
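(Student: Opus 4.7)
The plan is to handle each of the three parts in order, using the machinery already assembled in the section.

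For part (i), I would apply Proposition~\ref{0925-1} to the given spectrum. Computing $\sum_i \lambda_i^2 = 2k^{2} + 2a\theta^{2} + 0 \cdot b = nk$ immediately gives the formula $a = (nk - 2k^2)/(2\theta^2)$. Since the multiplicities in the spectrum must sum to $n$, I have $2 + 2a + b = n$, which upon substitution yields the claimed expression for $b$. The positivity is then just a restatement of the hypotheses $a, b \geq 1$.

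For part (ii), the upper bound $n \leq 2k(k^2 - \theta^2)$ is exactly the conclusion of Lemma~\ref{0923-3}, so no further work is needed there. For the lower bound, I would use part (i) together with the standing assumption $a \geq 1$: the inequality $(nk - 2k^2)/(2\theta^2) \geq 1$ rearranges directly to $n \geq 2(k^2 + \theta^2)/k$.

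For part (iii), my plan is to combine walk-regularity (Lemma~\ref{0923-4}) with a trace computation. Since $\G$ is walk-regular, $(A^{2r})_{x,x}$ is a constant $c_{2r}$ independent of $x \in V(\G)$, and this constant is a non-negative integer because it counts closed walks of length $2r$ at $x$. Summing over vertices gives $n c_{2r} = \tr(A^{2r}) = \sum_i \lambda_i^{2r} = 2k^{2r} + 2a\theta^{2r}$. Substituting $2a = (nk - 2k^2)/\theta^2$ from part (i) yields
\[
c_{2r} = \frac{1}{n}\bigl(2k^{2r} + (nk - 2k^2)\theta^{2r-2}\bigr),
\]
which is the desired integer.

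I do not anticipate any genuine obstacle: every ingredient (Hoffman's identity via Proposition~\ref{0925-1}, the diameter-based counting in Lemma~\ref{0923-3}, and walk-regularity in Lemma~\ref{0923-4}) has already been established earlier in the section, and what remains is essentially an arithmetic bookkeeping of the spectrum together with the interpretation of diagonal entries of $A^{2r}$ as integer walk counts.
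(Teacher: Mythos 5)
Your proposal is correct and follows essentially the same route as the paper: Proposition~\ref{0925-1} plus the multiplicity count for (i), the hypothesis $a \geq 1$ combined with Lemma~\ref{0923-3} for (ii), and walk-regularity of $c_{2r}$ via Lemma~\ref{0923-4} together with the trace identity for (iii). No gaps.
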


\begin{proof}
(i)
Since the sum of the multiplicities is the number of vertices, we have
\begin{equation}\label{0923-1}
2a+b+2 = n.
\end{equation}
Proposition~\ref{0925-1} implies
\begin{equation}\label{0923-2}
2k^2 + 2a\theta^2 = nk,
\end{equation}
which leads to $a = \frac{nk-2k^2}{2\theta^2}$.
Equalities~(\ref{0923-1}) and (\ref{0923-2}) imply $b = n-2 - \frac{nk-2k^2}{\theta^2}$.
Since the multiplicities of eigenvalues are positive integers,
$\frac{nk-2k^2}{2\theta^2}$ and $n-2 - \frac{nk-2k^2}{\theta^2}$ are also positive integers.

(ii)
Since $a \geq 1$,
we have $\frac{nk-2k^2}{2\theta^2} \geq 1$, i.e., $n \geq \frac{2(k^2+\theta^2)}{k}$.
On the other hand,
Lemma~\ref{0923-3} derives $n \leq 2k(k^2 - \theta^2)$.
Thus, we have $\frac{2(k^2+\theta^2)}{k} \leq n \leq 2k(k^2-\theta^2)$.

(iii)
By Lemma~\ref{0923-4}, the graph $\G$ is walk-regular.
Thus, the constant $c_{2r}$ is a non-negative integer for any positive integer $r$.
We have $\frac{1}{n} (2 \cdot k^{2r} + (nk-2k^2)\theta^{2r-2}) \in \MB{Z}$.
\end{proof}

The above proposition narrows down the candidates of graphs.
Indeed, the condition~(ii) makes possibility of $n$ finite when $k$ is fixed.
The conditions~(i) and~(iii) further restrict possibilities for $n$ and $k$.
For convenience, we call graphs to pass Proposition~\ref{0923-5} {\it feasible periodic graphs}.
Table~\ref{0926-1}, Table~\ref{0926-2}, and Table~\ref{0926-3},
which are shown after references for reasons of space,
list feasible periodic graphs.
In these tables, there are many graphs whose existence is unknown.
The column ``Existence" gives one example that realizes the spectrum if such graphs exist.
See \cite{BCN} for the symbols of the graphs.
In addition,
the symbol ``$-$" in the tables denotes that
the spectrum passes Proposition~\ref{0923-5} but the non-existence of a graph is shown by other reasons. The reasons are briefly described in the rightmost column of the tables.
For example,
a graph with $A$-spectrum of the form $\{ [\pm 4]^{1}, [\pm 2]^{a}, [0]^b \}$ is a 4-regular integral graph.
Thus, the classification by Stevanovi{\'c} \cite{S} apply,
and hence there is no such graph with more than $32$ vertices.

\subsection{Quadrangles}

We next focus on the sum of the fourth power of the eigenvalues.
This contains the information of quadrangles in graphs.
Such geometric information gives us a slightly stronger condition for the existence of graphs.
The following is substantially pointed out also in \cite{vD, S2011},
but we give a proof for wider readers.

\begin{lem}
{\it
Let $\G$ be a walk-regular graph with the adjacency eigenvalues
$k=\lambda_1 \geq \lambda_2 \geq \dots \geq \lambda_n$.
Denote by $q$ the number of quadrangles in $\G$ and
by $q_x$ the number of quadrangles containing a vertex $x \in V(\G)$.
Then we have
\begin{enumerate}[(i)]
\item $\sum_{i=1}^n \lambda_i^4 = n(2k^2-k) + 8q$; and
\item The number $q_x$ is a constant independent of the choice of a vertex,
and we have $q_x = \frac{4q}{n}$.
\end{enumerate}
}
\end{lem}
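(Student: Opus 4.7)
The plan is to evaluate $\tr(A^4) = \sum_{i=1}^n \lambda_i^4$ by first computing the diagonal entries $(A^4)_{xx}$ combinatorially, then summing over $x \in V(\G)$. Writing $(A^4)_{xx} = \sum_{y \in V(\G)} (A^2)_{xy}^2$ and recalling that $(A^2)_{xy}$ equals the number of common neighbors of $x$ and $y$, I would split off the term $y = x$, which contributes $(A^2)_{xx}^2 = k^2$, from the terms with $y \neq x$.

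For each $y \neq x$, the quantity $(A^2)_{xy}^2$ counts ordered pairs $(a,b)$ of common neighbors of $x$ and $y$. The diagonal part $a = b$ just gives $(A^2)_{xy}$, while the off-diagonal part $a \neq b$ is in bijection with oriented closed walks $x \to a \to y \to b \to x$ that genuinely traverse a quadrangle. Summing the diagonal part over $y \neq x$ yields
\[
\sum_{y \neq x} (A^2)_{xy} = (A^2 \BM{1})_x - (A^2)_{xx} = k^2 - k,
\]
since $\G$ is $k$-regular. The off-diagonal part, as $y$ varies over $V(\G) \setminus \{x\}$, counts every quadrangle through $x$ exactly twice, once per cyclic orientation through the basepoint $x$, so it contributes $2 q_x$. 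Combining, I obtain the central identity
\[
(A^4)_{xx} = 2k^2 - k + 2 q_x.
\]

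Part (i) then follows by summing over $x$: since each quadrangle is incident to four distinct vertices, $\sum_{x \in V(\G)} q_x = 4q$, and therefore
\[
\sum_{i=1}^n \lambda_i^4 = \tr(A^4) = \sum_{x \in V(\G)} (A^4)_{xx} = n(2k^2 - k) + 8q.
\]
Part (ii) is immediate from the central identity: walk-regularity makes $(A^4)_{xx}$ independent of $x$, hence so is $q_x$, and then $n q_x = \sum_x q_x = 4q$ forces $q_x = 4q/n$.

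The only point that requires genuine care is the bookkeeping of the two combinatorial factors: each quadrangle through $x$ produces exactly two oriented closed walks of length $4$ based at $x$ (one for each cyclic orientation), and each quadrangle contributes to $q_x$ for four distinct vertices. Beyond pinning these down the argument is a routine trace computation, so I do not anticipate any serious obstacle.
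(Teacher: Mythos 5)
Your proof is correct and arrives at exactly the same central identity $(A^4)_{xx}=2k^2-k+2q_x$ as the paper, but via a different decomposition of the closed $4$-walks. The paper enumerates them directly by shape, in four cases: the walk $(x,y_1,x,y_1,x)$ on a single edge ($k$ walks), the walk $(x,y_1,x,y_2,x)$ on two distinct edges at $x$ ($k(k-1)$ walks), the retraced path $(x,y_1,y_2,y_1,x)$ ($k(k-1)$ walks), and the genuine quadrangles ($2q_x$ walks), summing to $2k^2-k+2q_x$. You instead write $(A^4)_{xx}=\sum_y (A^2)_{xy}^2$, split off $y=x$ (giving $k^2$) and the diagonal pairs $a=b$ of common neighbours (giving $\sum_{y\neq x}(A^2)_{xy}=k^2-k$ via the row sums of $A^2$), leaving the off-diagonal pairs, which are in bijection with the two orientations of each quadrangle through $x$; your factor-of-two and factor-of-four bookkeeping is right. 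The two computations are equivalent, and everything downstream --- the double count $\sum_x q_x=4q$ for (i), and walk-regularity forcing $q_x$ constant with $nq_x=4q$ for (ii) --- is identical to the paper. Your route is a bit more algebraic and dispenses with the case analysis and figure; the paper's is more elementary and pictorial. Either is acceptable.
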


\begin{proof}
Let the adjacency matrix of $\G$ be $A$.
First, we observe the $(x,x)$ entry of $A^4$ for a vertex $x$.
It represents the number of closed walks of length $4$ from $x$ to $x$.
See Figure~\ref{1021-1}.
There are four possibilities for a closed walk of length $4$.
Case (a) shows a closed walk $(x, y_1, x, y_1, x)$ where only one vertex $y_1$ appears besides $x$.
There are $k$ ways to get $y_1$, so there are $k$ walks of this type.
Case (b) shows a closed walk $(x, y_1, x, y_2, x)$ where two vertices $y_1, y_2$ are adjacent to $x$.
The total number getting $y_1, y_2$ is $k(k-1)$.
Thus, there are $k(k-1)$ walks of this type.
Case (c) shows a closed walk $(x, y_1, y_2, y_1, x)$
where $y_1$ is adjacent to $x$ but $y_2$ is not adjacent to $x$.
There are $k$ ways to take a vertex $y_1$,
and for each of them there are $k-1$ ways to take $y_2$.
Thus, there are $k(k-1)$ walks of this type.
Case (d) shows a closed walk that forms a quadrangle.
For each quadrangle, there are two closed walks $(x, y_1, y_2, y_3, x)$ and $(x, y_3, y_2, y_1, x)$.
Thus, we get $2q_x$ walks of this type.
By the four cases, we have
\begin{equation} \label{1007-1}
(A^4)_{x,x} = 2k^2 - k + 2q_x.
\end{equation}
Let the set of the quadrangles in $\G$ be $\MC{Q}$.
Count $|\{ (x,Q) \in V(\G) \times \MC{Q} \mid x \in Q \}|$ in two ways, and we obtain
\begin{equation} \label{1111-1}
\sum_{x \in V(\G)} q_x = 4q.
\end{equation}
Indeed,
\[
|\{ (x,Q) \in V(\G) \times \MC{Q} \mid x \in Q \}| = \sum_{x \in V(\G)} | \{ Q \in \MC{Q} \mid x \in Q \} |
= \sum_{x \in V(\G)} q_x,
\]
and we have
\[
|\{ (x,Q) \in V(\G) \times \MC{Q} \mid x \in Q \}| = \sum_{Q \in \MC{Q}} |\{ x \in V(\G) \mid x \in Q \}|
= \sum_{Q \in \MC{Q}} 4 = 4q.
\]
Thus,
\[
\sum_{i=1}^n \lambda_i^4 = \sum_{x \in V(\G)}(A^4)_{x,x}
= \sum_{x \in V(\G)}(2k^2 - k + 2q_x)
= n(2k^2-k) + 8q.
\]
Equality~(\ref{1007-1}) and walk-regularity derive $c_4 = (A^4)_{x,x} = 2k^2 - k + 2q_x$,
so $q_x$ is also a constant.
Thus, Equality~(\ref{1111-1}) implies $4q = \sum_{x \in V(\G)} q_x = nq_x$,
that is, $q_x = \frac{4q}{n}$.
\end{proof}

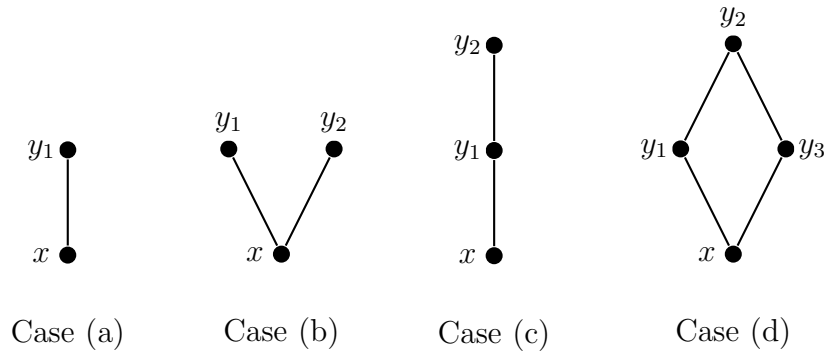
\begin{figure}[ht]
\begin{center}
\begin{tikzpicture}
[scale = 0.7,
line width = 0.8pt,
v/.style = {circle, fill = black, inner sep = 0.8mm},u/.style = {circle, fill = white, inner sep = 0.1mm}]
  \node[u] (100) at (0, -1.5) {Case (a)};
  \node[v] (1) at (0, 0) {};
  \node[u] (10) at (-0.5, 0) {$x$};
  \node[v] (2) at (0, 2) {};
  \node[u] (20) at (-0.5, 2) {$y_1$};
  \draw[-] (1) to (2);
\end{tikzpicture}
$\qquad$
\begin{tikzpicture}
[scale = 0.7,
line width = 0.8pt,
v/.style = {circle, fill = black, inner sep = 0.8mm},u/.style = {circle, fill = white, inner sep = 0.1mm}]
  \node[u] (100) at (0, -1.5) {Case (b)};
  \node[v] (1) at (0, 0) {};
  \node[u] (10) at (-0.5, 0) {$x$};
  \node[v] (2) at (-1, 2) {};
  \node[u] (20) at (-1, 2.5) {$y_1$};
  \node[v] (3) at (1, 2) {};
  \node[u] (30) at (1, 2.5) {$y_2$};
  \draw[-] (1) to (2);
  \draw[-] (1) to (3);
\end{tikzpicture}
$\qquad$
\begin{tikzpicture}
[scale = 0.7,
line width = 0.8pt,
v/.style = {circle, fill = black, inner sep = 0.8mm},u/.style = {circle, fill = white, inner sep = 0.1mm}]
  \node[u] (100) at (0, -1.5) {Case (c)};
  \node[v] (1) at (0, 0) {};
  \node[u] (10) at (-0.5, 0) {$x$};
  \node[v] (2) at (0, 2) {};
  \node[u] (20) at (-0.5, 2) {$y_1$};
  \node[v] (3) at (0, 4) {};
  \node[u] (30) at (-0.5, 4) {$y_2$};
  \draw[-] (1) to (2);
  \draw[-] (2) to (3);
\end{tikzpicture}
$\qquad$
\begin{tikzpicture}
[scale = 0.7,
line width = 0.8pt,
v/.style = {circle, fill = black, inner sep = 0.8mm},u/.style = {circle, fill = white, inner sep = 0.1mm}]
  \node[u] (100) at (0, -1.5) {Case (d)};
  \node[v] (1) at (0, 0) {};
  \node[u] (10) at (-0.5, 0) {$x$};
  \node[v] (2) at (-1, 2) {};
  \node[u] (20) at (-1.5, 2) {$y_1$};
  \node[v] (3) at (0, 4) {};
  \node[u] (30) at (0, 4.5) {$y_2$};
  \node[v] (4) at (1, 2) {};
  \node[u] (40) at (1.5, 2) {$y_3$};
  \draw[-] (1) to (2);
  \draw[-] (2) to (3);
  \draw[-] (3) to (4);
  \draw[-] (4) to (1);  
\end{tikzpicture}
\end{center}
\caption{Closed walks of length $4$} \label{1021-1}
\end{figure}

From the above lemma,
$q = \frac{1}{8}(\sum_{i=1}^n \lambda_i^4 - n(2k^2-k))$ and
$q_x = \frac{1}{2n}(\sum_{i=1}^n \lambda_i^4 - n(2k^2-k))$ must be non-negative integers.
This is another necessary condition
different from Proposition~\ref{0923-5} for the existence of graphs.
Indeed, several feasible periodic graphs in the tables are eliminated by this observation.

\begin{table}[h] {\footnotesize
  \centering
  \begin{tabular}{|c|c|c|c|c|}
\hline
$k$ & $n$ & Spectrum & Existence & Comment \\ \hline
4 & 12 & $\{ [\pm 4]^{1}, [\pm 2]^{2}, [0]^{6} \}$ & $C_6 \otimes J_2$ & \\
4 & 16 & $\{ [\pm 4]^{1}, [\pm 2]^{4}, [0]^{6} \}$ & $H(4,2)$ & \\ 
4 & 24 & $\{ [\pm 4]^{1}, [\pm 2]^{8}, [0]^{6} \}$ & $L(Q_3) \otimes K_2$ & \\
4 & 32 & $\{ [\pm 4]^{1}, [\pm 2]^{12}, [0]^{6} \}$ & $IG(AG(2,4) \setminus {\rm pc})$ & $q=0$, \cite{BCN, vDHKS} \\
4 & 48 & $\{ [\pm 4]^{1}, [\pm 2]^{20}, [0]^{6} \}$ & $-$ & \cite{S} \\ 
4 & 64 & $\{ [\pm 4]^{1}, [\pm 2]^{28}, [0]^{6} \}$ & $-$ & \cite{S} \\ 
4 & 96 & $\{ [\pm 4]^{1}, [\pm 2]^{44}, [0]^{6} \}$ & $-$ & \cite{S} \\ \hline 
6 & 18 & $\{ [\pm 6]^{1}, [\pm 3]^{2}, [0]^{12} \}$ & $C_6 \otimes J_3$ & \\
6 & 24 & $\{ [\pm 6]^{1}, [\pm 3]^{4}, [0]^{14} \}$ & $-$ & $q_x \not\in \MB{Z}$ \\
6 & 36 & $\{ [\pm 6]^{1}, [\pm 3]^{8}, [0]^{18} \}$ & ? & \\
6 & 54 & $\{ [\pm 6]^{1}, [\pm 3]^{14}, [0]^{24} \}$ & $H(3,3) \otimes K_2$ & \\
6 & 72 & $\{ [\pm 6]^{1}, [\pm 3]^{20}, [0]^{30} \}$ & $-$ & $q_x \not\in \MB{Z}$ \\
6 & 108 & $\{ [\pm 6]^{1}, [\pm 3]^{32}, [0]^{42} \}$ & ? & \\
6 & 162 & $\{ [\pm 6]^{1}, [\pm 3]^{50}, [0]^{60} \}$ & $IG(pg(5,5,2))$ & $q=0$, \cite{BCN, vDH} \\
6 & 216 & $\{ [\pm 6]^{1}, [\pm 3]^{68}, [0]^{78} \}$ & $-$ & $q<0$ \\
6 & 324 & $\{ [\pm 6]^{1}, [\pm 3]^{104}, [0]^{114} \}$ & $-$ & $q<0$ \\ \hline
8 & 24 & $\{ [\pm 8]^{1}, [\pm 4]^{2}, [0]^{18} \}$ & $C_6 \otimes J_4$ & \\
8 & 32 & $\{ [\pm 8]^{1}, [\pm 4]^{4}, [0]^{22} \}$ & $H(4,2) \otimes J_2$ & \\
8 & 48 & $\{ [\pm 8]^{1}, [\pm 4]^{8}, [0]^{30} \}$ & $L(Q_3) \otimes K_2 \otimes J_2$ & \\
8 & 64 & $\{ [\pm 8]^{1}, [\pm 4]^{12}, [0]^{38} \}$ & $K_{4,4} \sikaku K_{4,4}$ & \\
8 & 96 & $\{ [\pm 8]^{1}, [\pm 4]^{20}, [0]^{54} \}$ & ? & \\
8 & 128 & $\{ [\pm 8]^{1}, [\pm 4]^{28}, [0]^{70} \}$ & ? & \\
8 & 192 & $\{ [\pm 8]^{1}, [\pm 4]^{44}, [0]^{102} \}$ & ? & \\
8 & 256 & $\{ [\pm 8]^{1}, [\pm 4]^{60}, [0]^{134} \}$ & ? & \\
8 & 384 & $\{ [\pm 8]^{1}, [\pm 4]^{92}, [0]^{198} \}$ & ? & \\
8 & 512 & $\{ [\pm 8]^{1}, [\pm 4]^{124}, [0]^{262} \}$ & ? & \\
8 & 768 & $\{ [\pm 8]^{1}, [\pm 4]^{188}, [0]^{390} \}$ & ? & \\ \hline
10 & 30 & $\{ [\pm 10]^{1}, [\pm 5]^{2}, [0]^{24} \}$ & $C_6 \otimes J_5$ & \\
10 & 40 & $\{ [\pm 10]^{1}, [\pm 5]^{4}, [0]^{30} \}$ & $-$ & $q_x \not\in \MB{Z}$ \\
10 & 50 & $\{ [\pm 10]^{1}, [\pm 5]^{6}, [0]^{36} \}$ & ? & \\
10 & 60 & $\{ [\pm 10]^{1}, [\pm 5]^{8}, [0]^{42} \}$ & ? & \\
10 & 100 & $\{ [\pm 10]^{1}, [\pm 5]^{16}, [0]^{66} \}$ & ? & \\
10 & 120 & $\{ [\pm 10]^{1}, [\pm 5]^{20}, [0]^{78} \}$ & $-$ & $q_x \not\in \MB{Z}$ \\
10 & 150 & $\{ [\pm 10]^{1}, [\pm 5]^{26}, [0]^{96} \}$ & ? & \\
10 & 200 & $\{ [\pm 10]^{1}, [\pm 5]^{36}, [0]^{126} \}$ & $-$ & $q_x \not\in \MB{Z}$ \\
10 & 250 & $\{ [\pm 10]^{1}, [\pm 5]^{46}, [0]^{156} \}$ & ? & \\
10 & 300 & $\{ [\pm 10]^{1}, [\pm 5]^{56}, [0]^{186} \}$ & ? & \\
10 & 500 & $\{ [\pm 10]^{1}, [\pm 5]^{96}, [0]^{306} \}$ & ? & \\
10 & 600 & $\{ [\pm 10]^{1}, [\pm 5]^{116}, [0]^{366} \}$ & $-$ & $q_x \not\in \MB{Z}$ \\
10 & 750 & $\{ [\pm 10]^{1}, [\pm 5]^{146}, [0]^{456} \}$ & ? & \\
10 & 1000 & $\{ [\pm 10]^{1}, [\pm 5]^{196}, [0]^{606} \}$ & $-$ & $q_x \not\in \MB{Z}$ \\
10 & 1250 & $\{ [\pm 10]^{1}, [\pm 5]^{246}, [0]^{756} \}$ & ? & \\
10 & 1500 & $\{ [\pm 10]^{1}, [\pm 5]^{296}, [0]^{906} \}$ & ? & \\ \hline
$\vdots$&$\vdots$&&& \\
\end{tabular}
 \caption{Feasible periodic graphs whose $A$-spectra are the form
 $\{ [\pm k]^{1}, [\pm \frac{k}{2}]^{a}, [0]^{b} \} $.} \label{0926-1}
}
\end{table}

\begin{table}[h] 
  \centering
  \begin{tabular}{|c|c|c|c|c|}
\hline
$k$ & $n$ & Spectrum & Existence & Comment \\ \hline
2 & 8 & $\{ [\pm 2]^{1}, [\pm \sqrt{2}]^{2}, [0]^{2} \}$ & $C_8$ & \\ \hline
4 & 16 & $\{ [\pm 4]^{1}, [\pm 2\sqrt{2}]^{2}, [0]^{10} \}$ & $C_8 \otimes J_2$ & \\
4 & 32 & $\{ [\pm 4]^{1}, [\pm 2\sqrt{2}]^{6}, [0]^{18} \}$ & TD$_{1}(2,4) \otimes J_{2,1}$ & \cite{vDS} \\
4 & 64 & $\{ [\pm 4]^{1}, [\pm 2\sqrt{2}]^{14}, [0]^{34} \}$ & ? & \\ \hline
6 & 18 & $\{ [\pm 6]^{1}, [\pm 3\sqrt{2}]^{1}, [0]^{14} \}$ & $-$ & $q \not\in \MB{Z}$ \\
6 & 24 & $\{ [\pm 6]^{1}, [\pm 3\sqrt{2}]^{2}, [0]^{18} \}$ & $C_8 \otimes J_3$ & \\
6 & 36 & $\{ [\pm 6]^{1}, [\pm 3\sqrt{2}]^{4}, [0]^{26} \}$ & ? & \\
6 & 48 & $\{ [\pm 6]^{1}, [\pm 3\sqrt{2}]^{6}, [0]^{34} \}$ & $-$ & $q_x \not\in \MB{Z}$ \\
6 & 54 & $\{ [\pm 6]^{1}, [\pm 3\sqrt{2}]^{7}, [0]^{38} \}$ & $-$ & $q \not\in \MB{Z}$ \\
6 & 72 & $\{ [\pm 6]^{1}, [\pm 3\sqrt{2}]^{10}, [0]^{50} \}$ & ? & \\
6 & 108 & $\{ [\pm 6]^{1}, [\pm 3\sqrt{2}]^{16}, [0]^{74} \}$ & ? & \\
6 & 144 & $\{ [\pm 6]^{1}, [\pm 3\sqrt{2}]^{22}, [0]^{98} \}$ & $-$ & $q_x \not\in \MB{Z}$ \\
6 & 162 & $\{ [\pm 6]^{1}, [\pm 3\sqrt{2}]^{25}, [0]^{110} \}$ & $-$ & $q \not\in \MB{Z}$ \\
6 & 216 & $\{ [\pm 6]^{1}, [\pm 3\sqrt{2}]^{34}, [0]^{146} \}$ & ? & \\ \hline
8 & 32 & $\{ [\pm 8]^{1}, [\pm 4\sqrt{2}]^{2}, [0]^{26} \}$ & $C_8 \otimes J_4$ & \\
8 & 64 & $\{ [\pm 8]^{1}, [\pm 4\sqrt{2}]^{6}, [0]^{50} \}$ & TD$_{1}(2,4) \otimes J_{2,1} \otimes J_2$ & \cite{vDS} \\
8 & 128 & $\{ [\pm 8]^{1}, [\pm 4\sqrt{2}]^{14}, [0]^{98} \}$ & ? & \\
8 & 256 & $\{ [\pm 8]^{1}, [\pm 4\sqrt{2}]^{30}, [0]^{194} \}$ & ? & \\
8 & 512 & $\{ [\pm 8]^{1}, [\pm 4\sqrt{2}]^{62}, [0]^{386} \}$ & ? & \\ \hline
10 & 40 & $\{ [\pm 10]^{1}, [\pm 5\sqrt{2}]^{2}, [0]^{34} \}$ & $C_8 \otimes J_5$ & \\
10 & 50 & $\{ [\pm 10]^{1}, [\pm 5\sqrt{2}]^{3}, [0]^{42} \}$ & $-$ & $q \not\in \MB{Z}$ \\
10 & 80 & $\{ [\pm 10]^{1}, [\pm 5\sqrt{2}]^{6}, [0]^{66} \}$ & $-$ & $q_x \not\in \MB{Z}$ \\
10 & 100 & $\{ [\pm 10]^{1}, [\pm 5\sqrt{2}]^{8}, [0]^{82} \}$ & ? & \\
10 & 200 & $\{ [\pm 10]^{1}, [\pm 5\sqrt{2}]^{18}, [0]^{162} \}$ & ? & \\
10 & 250 & $\{ [\pm 10]^{1}, [\pm 5\sqrt{2}]^{23}, [0]^{202} \}$ & $-$ & $q \not\in \MB{Z}$ \\
10 & 400 & $\{ [\pm 10]^{1}, [\pm 5\sqrt{2}]^{38}, [0]^{322} \}$ & $-$ & $q_x \not\in \MB{Z}$ \\
10 & 500 & $\{ [\pm 10]^{1}, [\pm 5\sqrt{2}]^{48}, [0]^{402} \}$ & ? & \\
10 & 1000 & $\{ [\pm 10]^{1}, [\pm 5\sqrt{2}]^{98}, [0]^{802} \}$ & ? & \\ \hline
$\vdots$&$\vdots$&&& \\
\end{tabular}
 \caption{Feasible periodic graphs whose $A$-spectra are the form
 $\{ [\pm k]^{1}, [\pm \frac{\sqrt{2}}{2}k]^{a}, [0]^{b} \}$.} \label{0926-2}
\end{table}

\begin{table}[h] 
  \centering
  \begin{tabular}{|c|c|c|c|c|}
\hline
$k$ & $n$ & Spectrum & Existence & Comment \\ \hline
4 & 32 & $\{ [\pm 4]^{1}, [\pm 2\sqrt{3}]^{4}, [0]^{22} \}$ & $-$ & \cite{vDS} \\ \hline
8 & 64 & $\{ [\pm 8]^{1}, [\pm 4\sqrt{3}]^{4}, [0]^{54} \}$ & ? & \\
8 & 256 & $\{ [\pm 8]^{1}, [\pm 4\sqrt{3}]^{20}, [0]^{214} \}$ & ? & \\ \hline
10 & 50 & $\{ [\pm 10]^{1}, [\pm 5\sqrt{3}]^{2}, [0]^{44} \}$ & $-$ & $q \not\in \MB{Z}$ \\
10 & 200 & $\{ [\pm 10]^{1}, [\pm 5\sqrt{3}]^{12}, [0]^{174} \}$ & $-$ & $q \not\in \MB{Z}$ \\
10 & 500 & $\{ [\pm 10]^{1}, [\pm 5\sqrt{3}]^{32}, [0]^{434} \}$ & ? & \\ \hline
12 & 96 & $\{ [\pm 12]^{1}, [\pm 6\sqrt{3}]^{4}, [0]^{86} \}$ & ? & \\
12 & 384 & $\{ [\pm 12]^{1}, [\pm 6\sqrt{3}]^{20}, [0]^{342} \}$ & ? & \\ \hline
14 & 196 & $\{ [\pm 14]^{1}, [\pm 7\sqrt{3}]^{8}, [0]^{178} \}$ & ? & \\
14 & 1372 & $\{ [\pm 14]^{1}, [\pm 7\sqrt{3}]^{64}, [0]^{1242} \}$ & ? & \\ \hline
16 & 128 & $\{ [\pm 16]^{1}, [\pm 8\sqrt{3}]^{4}, [0]^{118} \}$ & ? & \\
16 & 512 & $\{ [\pm 16]^{1}, [\pm 8\sqrt{3}]^{20}, [0]^{470} \}$ & ? & \\
16 & 2048 & $\{ [\pm 16]^{1}, [\pm 8\sqrt{3}]^{84}, [0]^{1878} \}$ & ? & \\ \hline
20 & 100 & $\{ [\pm 20]^{1}, [\pm 10\sqrt{3}]^{2}, [0]^{94} \}$ & ? & \\
20 & 160 & $\{ [\pm 20]^{1}, [\pm 10\sqrt{3}]^{4}, [0]^{150} \}$ & ? & \\
20 & 250 & $\{ [\pm 20]^{1}, [\pm 10\sqrt{3}]^{7}, [0]^{234} \}$ & ? & \\
20 & 400 & $\{ [\pm 20]^{1}, [\pm 10\sqrt{3}]^{12}, [0]^{374} \}$ & ? & \\
20 & 640 & $\{ [\pm 20]^{1}, [\pm 10\sqrt{3}]^{20}, [0]^{598} \}$ & ? & \\
20 & 1000 & $\{ [\pm 20]^{1}, [\pm 10\sqrt{3}]^{32}, [0]^{934} \}$ & ? & \\
20 & 1600 & $\{ [\pm 20]^{1}, [\pm 10\sqrt{3}]^{52}, [0]^{1494} \}$ & ? & \\
20 & 2500 & $\{ [\pm 20]^{1}, [\pm 10\sqrt{3}]^{82}, [0]^{2334} \}$ & ? & \\
20 & 4000 & $\{ [\pm 20]^{1}, [\pm 10\sqrt{3}]^{132}, [0]^{3734} \}$ & ? & \\ \hline
$\vdots$&$\vdots$&&& \\
\end{tabular}
 \caption{Feasible periodic graphs whose $A$-spectra are the form
 $\{ [\pm k]^{1}, [\pm \frac{\sqrt{3}}{2}k]^{a}, [0]^{b} \}$.} \label{0926-3}
\end{table}

\section{Discussion and questions} \label{1028-6}

In this paper,
we completely determined bipartite regular graphs with four distinct adjacency eigenvalues
that induce periodic Grover walks, and show that it is only $C_6$.
In addition, we showed that there are only three kinds of the second largest eigenvalues
of bipartite regular periodic graphs with five distinct eigenvalues.
Focusing on certain conditions obtained from walk-regularity and the number of quadrangles,
we have narrowed down periodic graphs.
However, there are still many in the tables whose existence is unknown.
Table~\ref{0926-1} is directly related to the existence and classification of integral graphs,
which is a problem in spectral graph theory.
Indeed, the case $k=4$ in Table~\ref{0926-1} was completed by the results of Stevanovi{\'c} \cite{S}.
Similar problems for the case $k \geq 6$ remain to be solved:

\begin{question}
{\it
Fix an even number $k \geq 6$.
Classify the integral graphs whose $A$-spectra are the form $\{[\pm k]^1, [\pm \frac{k}{2}]^a, [0]^b\}$.
}
\end{question}

%

None of the graphs in Table~\ref{0926-3} have been found so far.
The existence of such graphs should also be investigated:

\begin{question}
{\it
Is there a graph whose A-spectrum is the form $\{ [\pm k]^{1}, [\pm \frac{\sqrt{3}}{2}k]^{a}, [0]^{b} \}$?
}
\end{question}

\section*{Acknowledgements}
S.K. is supported by JSPS KAKENHI (Grant No. 20J01175).

\end{document}